\documentclass[final]{siamltex}
\usepackage{amssymb}
\usepackage{amsmath,amssymb,amsfonts}
\usepackage{graphicx}
\usepackage{epsfig}

\title{Spectral Indicator Method for A Non-selfadjoint Steklov Eigenvalue Problem
\thanks{The research of J. Liu was supported in part
by Guangdong Natural Science Foundation of China (2016A030313074). The research
of J. Sun  was supported in part by NSF Grant DMS-1521555.}}
\author{J. Liu
\thanks{Department of Mathematical Sciences, Jinan University, Guangzhou, 130012, China ({\tt liujuan@jnu.edu.cn}).}
\and J. Sun 
\thanks{Department of Mathematical Sciences, Michigan Technological University, Houghton, MI 49931, U.S.A. ({\tt jiguangs@mtu.edu}).}
\and T. Turner 
\thanks{Department of Mathematics and Computer Science, University of Maryland Eastern Shore, Princess Anne, MD 21853, U.S.A. ({\tt tdturner@umes.edu}).}
}
\date{}
\begin{document}
\maketitle

\begin{abstract}
We propose an efficient numerical method for a non-selfadjoint Steklov eigenvalue problem. The Lagrange finite element is used for discretization. The convergence is proved using the spectral perturbation theory for compact operators. The non-sefadjointness of the problem leads to non-Hermitian matrix eigenvalue problem. Due to the existence of complex eigenvalues and lack of a priori spectral information, we propose a modified version of the recently developed spectral indicator method to compute (complex) eigenvalues in a given region on the complex plane. In particular, to reduce computational cost, the problem is transformed into a much smaller matrix eigenvalue problem involving the unknowns only on the boundary of the domain. Numerical examples are presented to validate the effectiveness of the proposed method.
\end{abstract}

\section{Introduction}
Steklov eigenvalue problems arise in mathematical physics with spectral parameters in the boundary conditions \cite{Kuznetsov2014NAMS}.
Applications of Steklov eigenvalues include surface waves, mechanical oscillators immersed in a viscous fluid, 
the vibration modes of a structure in contact with an incompressible fluid, etc 
\cite{Kuznetsov2014NAMS, CanavatiMinzoni1979JMAA, CaoEtal2013SIAMNA}. Recently, Steklov eigenvalues have been used 
in the inverse scattering theory to reconstruct the index of refraction of an inhomogeneous media \cite{Cakoni2016SIAMAM}. 
Note that most Steklov eigenvalue problems considered
in the literature are related to partial differential equations of second order. However,
Steklov eigenvalue problems of higher order were also studied, e.g., the fourth order Steklov eigenvalue problem \cite{AnBiLuo2016JIA}.

In contrast to the theoretical study of the Steklov eigenvalue problem, numerical methods, in particular, finite element methods have attracted some researchers rather recently
\cite{AndreevTodorov2004IMANA, BiLiYang2016ANM, BiLiYang2016NMPDE, MoraEtal2015MMMA, CaoEtal2013SIAMNA, ArmentanoPadra2008ANM, KumarPrashat2010CMM,DelloRussoEtal2011CMA,JiaLuoFuXie2014AMSin}. 
Various methods have been proposed, including the isoparametric finite element method \cite{AndreevTodorov2004IMANA}, 
the virtual element method \cite{MoraEtal2015MMMA}, non-conforming finite element methods \cite{DelloRussoEtal2011CMA, JiaLuoFuXie2014AMSin},
the spectral-Galerkin method \cite{AnBiLuo2016JIA}, adaptive methods \cite{BiLiYang2016ANM}, multilevel methods \cite{Xie2014IMANA}, etc. 
All of the above works consider the selfadjoint cases.
In this paper, we consider a non-selfadjoint Steklov eigenvalue problem arising in the study of non-homogeneous absorbing medium in inverse scattering theory \cite{Cakoni2016SIAMAM}.
There seems to exist only one paper by Bramble and Osborn \cite{BrambleOsborn1972}, which considered the non-selfadjoint case.
However, the second order non-selfadjoint operator is assumed to be uniformly elliptic and no numerical results were reported in \cite{BrambleOsborn1972}.
In this sense, the current paper is the first paper contains both finite element theory and numerical examples for a non-selfadjoint Steklov eigenvalue problem, to the authors' knowledge.
For the general theory and examples of finite element methods for 
eigenvalue problems, we refer the readers to the book chapter by Babu\v{s}ka and Osborn \cite{BabuskaOsborn1991}, the review paper by Boffi \cite{Boffi2010AN}, 
and the recently published book by Sun and Zhou \cite{SunZhou2016}.

There are two major challenges to develop effective finite element methods for non-selfadjoint eigenvalue problems \cite{Osborn1975MC, BabuskaOsborn1991, SunZhou2016}. 
The first one is the lack of systematic tools to prove the convergence of the finite element discretization. In general, for an eigenvalue problem, the convergence of the finite
element method for the associated source problem needs to be established first, which is not as easy as the selfadjoint cases.
The second one is the lack of effective eigensolvers to compute the complex eigenvalues when no a priori spectral information is available. 
Finite element discretization of non-selfadjoint eigenvalue problems usually 
leads to non-Hermitian generalized matrix eigenvalue problems, which are very challenging in numerical linear algebra \cite{Saad2011}.

In a recent paper \cite{Huang2016JCP}, a novel spectral indicator eigenvalue solver {\bf RIM} (recursive integral method) is developed for non-Hermitian eigenvalue problems.
\textbf{RIM} computes all eigenvalues in a region on the complex plane $\mathbb C$ without any a priori spectral information. Roughly speaking, 
given a region $S \subset \mathbb C$ whose boundary $\Gamma:=\partial S$ is a simple closed curve, 
{\bf RIM} computes an indicator $\delta_S$ for $S$ using spectral projection defined by a Cauchy contour integral on $\Gamma$.
The indicator is used to decide if $S$ contains eigenvalue(s). In case of positive answers, $S$ is divided into sub-regions
and indicators for these sub-regions are computed. The procedure continues until the size of the region is smaller than 
a specified precision $d_0$ (e.g., $d_0 = 10^{-9}$).
The centers of the regions are the approximations of eigenvalues.
It is noted that contour integral is a classical tool in 
operator theory \cite{Kato1966}. It became popular recently to approximate eigenvalue problems using invariant subspaces \cite{SakuraiSugiura2003CAM, 
Polizzi2009PRB, Beyn2012LAA}.

In this paper, we propose a simple finite element method and expand the spectral indicator method {\bf RIM} to compute complex Steklov eigenvalues in the region of interest.
The contributions of the paper include 1) it provides a finite element analysis for a non-selfadjoint Steklov eigenvalue problem; 2) it reduces the computation
to the boundary of the domain, i.e., treating a much smaller discrete problem; and 3) it extends a new eigensolver for the resulting non-Hermitian matrix eigenvalue problem.

The rest of the paper is organized as follows. In Section 2, we introduce the Steklov eigenvalue problem, its adjoint problems, variational formulations, and prove the
well-posedness. In Section 3, we propose a linear finite element method and prove the convergence. In Section 4, we extend the new spectral indicator method 
for the resulting non-Hermitian matrix eigenvalue problems. 
In particular, to reduce computational cost, the problem is transformed into
a much smaller matrix eigenvalue problem involving the unknowns only on the boundary of the domain. 
Numerical examples are presented in Section 5. Finally, some conclusions and future works are discussed in Section 6.

\section{A Non-selfadjoint Steklov Eigenvalue Problem}
Let $\Omega \subset \mathbb R^2$ be a bounded polygon with Lipshitz boundary $\partial \Omega$. Let $\nu$ be the unit outward normal to $\partial \Omega$.
Let $k$ be the wavenumber and $n(x)$ be the index of refraction.
We consider the Steklov eigenvalue problem to find $\lambda \in \mathbb C$ and a nontrivial function $u \in H^1(\Omega)$ such that
\begin{subequations}\label{SteklovEig}
\begin{align}
\label{DirichletE} \triangle u + k^2n(x) u &=0 \qquad \text{in } \Omega,\\[1mm]
\label{DirichletB}\frac{\partial u}{ \partial \nu}+\lambda u &=0\qquad \text{on } \partial \Omega.
\end{align}
\end{subequations}
Define
\[
(u, v) = \int_\Omega u \overline{v} \,dx,  \qquad \langle f, g \rangle = \int_{\partial \Omega} f \overline g \,ds,
\]
and the continuous sesquilinear form
\[
a(u, v):= (\nabla u, \nabla v) - k^2(nu, v) \quad \text{for all } u, v \in H^1(\Omega).
\]

The weak formulation for \eqref{SteklovEig} is to find $(\lambda, u) \in \mathbb C \times H^1(\Omega)$ such that
\begin{equation}\label{SteklovEigWeak}
 a(u, v) = -\lambda\langle u, v \rangle \quad \text{for all } v \in H^1(\Omega).
\end{equation}
The associated source problem is, given $g \in L^2(\partial \Omega)$, to find $u \in H^1(\Omega)$ such that 
\begin{equation}\label{SteklovSWeak}
a(u, v) = \langle g, v \rangle \quad \text{for all } v \in H^1(\Omega).
\end{equation}
In this paper, we assume that $n(x)$ is a bounded complex valued function given by
\[
n(x) = n_1(x) + i \frac{n_2(x)}{k},
\]
where $i = \sqrt{-1}$, $n_1(x) > 0$ and $n_2(x) \ge 0$ are bounded smooth functions.

Define an operator $\mathcal{C}: H^1(\Omega) \to H^1(\Omega)$ which maps $u \in H^1(\Omega)$ to $w \in H^1(\Omega)$ satisfying
\[
(w, v)_{H^1(\Omega)}= k^2(n(x)u, v) \quad \text{for all } v \in H^1(\Omega).
\]
It is clear that the above problem has a unique solution. The regularity result for elliptic problems implies that $w \in H^2(\Omega)$ if $u \in H^1(\Omega)$ and $n(x) \in H^1(\Omega)$.
Hence $\mathcal{C}: u \to w$ is an compact operator \cite{BrennerScott2008}.  

It is easy to verify that $a(\cdot, \cdot)$ satisfies the G{\aa}rding's inequality \cite{BrennerScott2008}, i.e., there exist constants $K < \infty$ and $\alpha_0 > 0$ such that
\begin{equation}\label{Garding}
\text{Re}\left\{a(v, v)\right\} + K\|v\|^2_{L^2(\Omega)} \ge \alpha_0 \|v\|^2_{H^1(\Omega)} \quad \text{for all } u \in H^1(\Omega).
\end{equation}
Let $K$ be a positive constant, which is large enough. Define the sesquilinear form $A: H^1 \times H^1 \to \mathbb C$ such that
\begin{equation}\label{sesA}
A(u,v) := a(u, v) + K(u, v) = (\nabla u, \nabla v) - k^2(nu, v) + K(u, v), \quad u, v \in H^1(\Omega).
\end{equation}
The following lemma shows that $A$ is $H^1(\Omega)$-elliptic \cite{HsiaoWendland2008}.

\begin{lemma}\label{AisHelliptic}
For $K$ large enough, the sesquilinear form $A$ is $H^1(\Omega)$-elliptic, i.e., there exists $\alpha_0 > 0$ such that
\[
|A(v,v)| \ge \alpha_0 \|v\|^2_{H^1(\Omega)} \quad \text{for all } v \in H^1(\Omega).
\]
\end{lemma}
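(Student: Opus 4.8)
The plan is to reduce the claim to the G\aa rding inequality \eqref{Garding}, which has already been established, together with the elementary bound $|z| \ge \mathrm{Re}(z)$ valid for every $z \in \mathbb{C}$. The key observation is that the added term $K(u,v)$ is precisely what converts the G\aa rding estimate (coercive only up to an $L^2$ defect term) into a genuine ellipticity estimate on the diagonal $u = v$.

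First I would set $u = v$ in \eqref{sesA} to obtain $A(v,v) = a(v,v) + K(v,v)$. Since $(v,v) = \int_\Omega v\overline{v}\,dx = \|v\|_{L^2(\Omega)}^2$ is real and nonnegative and $K$ is a real constant, taking real parts yields $\mathrm{Re}\{A(v,v)\} = \mathrm{Re}\{a(v,v)\} + K\|v\|_{L^2(\Omega)}^2$. This isolates exactly the left-hand side of \eqref{Garding}.

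Next I would apply the G\aa rding inequality \eqref{Garding}. Choosing $K$ at least as large as the constant appearing in \eqref{Garding} (this is the meaning of ``$K$ large enough''; for any larger $K$ the extra nonnegative term $K\|v\|_{L^2(\Omega)}^2$ only strengthens the bound), I get $\mathrm{Re}\{A(v,v)\} \ge \alpha_0 \|v\|_{H^1(\Omega)}^2$. Finally, since $|z| \ge \mathrm{Re}(z)$ for every complex number $z$, I conclude $|A(v,v)| \ge \mathrm{Re}\{A(v,v)\} \ge \alpha_0 \|v\|_{H^1(\Omega)}^2$, which is the desired ellipticity with the same constant $\alpha_0$.

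I do not expect a genuine obstacle here: essentially all of the analytic content has been front-loaded into the G\aa rding inequality, whose verification the excerpt already asserts is straightforward. The only points requiring a moment's care are (i) confirming that $K(v,v)$ contributes a real, nonnegative quantity so that it adds cleanly to the real part of $a(v,v)$, and (ii) the passage from a lower bound on the real part to a lower bound on the modulus via $|z| \ge \mathrm{Re}(z)$, which is exactly what allows the estimate to survive for the non-selfadjoint (complex $n(x)$) form rather than only for its symmetric part.
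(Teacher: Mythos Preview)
Your proposal is correct and follows essentially the same route as the paper: bound $|A(v,v)|$ below by its real part, then recognize that $\mathrm{Re}\{A(v,v)\} = \mathrm{Re}\{a(v,v)\} + K\|v\|_{L^2(\Omega)}^2$ and invoke the G\aa rding inequality. The only cosmetic difference is that the paper unpacks the G\aa rding estimate inline (bounding $n_1(x)$ by a constant $B$) and records the explicit constant $\alpha_0 = \min\{1, K - k^2 B\}$, whereas you appeal to \eqref{Garding} as a black box.
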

\begin{proof} Since $n_1(x)$ is bounded, there exist a constant $B$ such that $n_1(x) < B$ for all $x \in \Omega$.
Using the G{\aa}rding's inequality \eqref{Garding}, we have that
\begin{eqnarray*}
|A(v,v)|&=&  |(\nabla v, \nabla v) - k^2(nv, v) + K(v, v)| \\
&\ge& \text{Re} \left\{ (\nabla v, \nabla v) - k^2(nv, v) + K(v, v) \right\} \\
&=&  (\nabla v, \nabla v) - k^2(n_1(x)v, v) + K(v,v) \\
&\ge& (\nabla v, \nabla v) - k^2B(v, v) + K(v,v)\\
&\ge& \alpha_0 \|v\|^2_{H^1(\Omega)},
\end{eqnarray*}
where $\alpha_0=\min\{1, K-k^2B\}$ for $K$ large enough.
\end{proof}

As a consequence, the Fredholm alternative can be used to show the existence of a unique solution for \eqref{SteklovSWeak}. 
To this end, we need to define the (generalized) Neumann eigenvalues. 
\begin{definition}
The Neumann eigenvalue problem associated with $n(x)$ is to find $k^2 \in \mathbb C$ and a nontrivial $u \in H^1(\Omega)$ such that
\begin{subequations}\label{NeumannEig}
\begin{align}
\label{NeumannEE} \triangle u +k^2 n(x) u &=0\qquad \text{in } \Omega,\\[1mm]
\label{NeumannEB}\frac{\partial u}{ \partial \nu}&=0 \qquad \text{on } \partial \Omega.
\end{align}
\end{subequations}
\end{definition}
\begin{theorem}
Let $g \in L^2(\Omega)$. Assuming that $k^2$ is not a Neumann eigenvalue associated with $n(x)$ on $\Omega$, there exists a unique solution $u \in H^1(\Omega)$ to \eqref{SteklovSWeak} 
such that 
\begin{equation}\label{regularityu}
\|u\|_{H^1(\Omega)} \le C \|g\|_{L^2(\partial \Omega)}.
\end{equation}
\end{theorem}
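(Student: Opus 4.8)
The plan is to recast the weak problem \eqref{SteklovSWeak} as an operator equation of the form $(I - T)u = G$ with $T$ compact, and then apply the Fredholm alternative, using the hypothesis that $k^2$ is not a Neumann eigenvalue to rule out a nontrivial kernel. (Note that, since $\langle g, v\rangle = \int_{\partial\Omega} g\overline{v}\,ds$ and the bound is stated in $\|g\|_{L^2(\partial\Omega)}$, I interpret the data as $g \in L^2(\partial\Omega)$.)

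First I would exploit the $H^1(\Omega)$-ellipticity of $A$ established in Lemma~\ref{AisHelliptic}. By the Lax--Milgram theorem, for every bounded antilinear functional on $H^1(\Omega)$ the problem of finding $w$ with $A(w, v) = \langle \text{data}, v\rangle$ for all $v$ has a unique, continuously dependent solution. In particular, I would define $G \in H^1(\Omega)$ as the unique solution of $A(G, v) = \langle g, v\rangle$ for all $v$; since $v \mapsto \langle g, v\rangle$ is bounded on $H^1(\Omega)$ by the trace theorem, one gets $\|G\|_{H^1(\Omega)} \le C\|g\|_{L^2(\partial\Omega)}$. I would also define the operator $T : H^1(\Omega) \to H^1(\Omega)$ by $A(Tw, v) = K(w, v)$ for all $v$. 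Because $a = A - K(\cdot,\cdot)$ by \eqref{sesA}, the source problem \eqref{SteklovSWeak} is equivalent to $A(u, v) = K(u, v) + \langle g, v\rangle$, that is, to the operator equation $(I - T)u = G$.

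The key structural step is to verify that $T$ is compact. Testing with $v = Tw$ and using ellipticity gives $\alpha_0 \|Tw\|_{H^1(\Omega)}^2 \le |A(Tw, Tw)| = K|(w, Tw)| \le K\|w\|_{L^2(\Omega)}\|Tw\|_{H^1(\Omega)}$, hence $\|Tw\|_{H^1(\Omega)} \le (K/\alpha_0)\|w\|_{L^2(\Omega)}$. Thus $T$ factors through the embedding $H^1(\Omega) \hookrightarrow L^2(\Omega)$, which is compact by the Rellich--Kondrachov theorem, so $T$ is compact (this is the same mechanism that makes the operator $\mathcal{C}$ compact). By the Fredholm alternative, $I - T$ is boundedly invertible if and only if it is injective.

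Finally I would establish injectivity from the hypothesis. If $(I - T)u = 0$, then $A(u, v) = K(u, v)$, i.e. $a(u, v) = 0$ for all $v \in H^1(\Omega)$. Choosing test functions in $C_c^\infty(\Omega)$ and integrating by parts recovers $\triangle u + k^2 n(x) u = 0$ in $\Omega$ distributionally; feeding this back into $a(u,v)=0$ for general $v$ leaves only the boundary term $\langle \partial u/\partial\nu, v\rangle = 0$, which forces $\partial u/\partial\nu = 0$ on $\partial\Omega$. Hence $u$ would be a Neumann eigenfunction associated with $k^2$ in the sense of \eqref{NeumannEig}, which is excluded, so $u = 0$. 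Therefore $I - T$ is injective and thus bijective with bounded inverse, the solution is $u = (I - T)^{-1}G$, and \eqref{regularityu} follows from $\|u\|_{H^1(\Omega)} \le \|(I-T)^{-1}\|\,\|G\|_{H^1(\Omega)} \le C\|g\|_{L^2(\partial\Omega)}$. I expect the main obstacle to be the identification of the kernel with the Neumann spectrum, since it requires care in interpreting the weak Neumann boundary condition; the compactness of $T$ is routine once the $L^2$-bound above is in hand.
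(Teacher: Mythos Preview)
Your proposal is correct and follows essentially the same approach as the paper: both arguments obtain well-posedness via the Fredholm alternative, using the assumption that $k^2$ is not a Neumann eigenvalue to secure uniqueness. The paper's proof is a two-line sketch that cites \cite{HsiaoWendland2008} for the Fredholm machinery, whereas you spell out explicitly the compact perturbation $(I-T)u=G$ built from the coercive form $A$ of Lemma~\ref{AisHelliptic}; your added observation that the hypothesis should read $g\in L^2(\partial\Omega)$ is also correct.
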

\begin{proof}
Since $k^2$ is not a Neumann eigenvalue, then uniqueness holds for \eqref{SteklovSWeak}. By Fredholm Alternative (see e.g., Section 5.3 of \cite{HsiaoWendland2008}), 
there exists a unique solution $u$ to \eqref{SteklovSWeak}
and the regularity follows readily.
\end{proof}

Consequently, one can define an operator, which is in fact the Neumann-to-Dirichlet mapping, $T: L^2(\partial \Omega) \to L^2(\partial \Omega)$ \cite{Cakoni2016SIAMAM}
\begin{equation}\label{NtDmapping}
Tg = u|_{\partial \Omega}.
\end{equation}
The mapping $T$ is compact since $Tg \in H^{1/2}(\partial \Omega)$ and $H^{1/2}(\partial \Omega)$ is compactly embedded in $L^2(\partial \Omega)$.
Denote an eigenpair of $T$ by  $(\mu, g)$ such that
\[
Tg = \mu g. 
\]
It is clear that $\mu$ and $\lambda$ are related
\[
 \lambda = -1/\mu.
\]

We shall also need the adjoint operator $T^*$ of $T$ for the proof of convergence of the finite element method later. 
Consider the adjoint problem for \eqref{SteklovSWeak}. Given $g \in L^2(\partial \Omega)$, find $v \in H^1(\Omega)$ such that
\begin{equation}\label{adjoint}
a(u, v) = \langle u, g\rangle \quad \text{for all } u \in H^1(\Omega).
\end{equation}
Then \eqref{adjoint} has a unique solution $v$. The solution operator for \eqref{adjoint} is the adjoint operator $T^*g: L^2(\partial \Omega) \to L^2(\partial \Omega)$ such that $T^*g = v|_{\partial \Omega}$.

\section{Finite Element Approximation}
In this section, we present a finite element approximation $T_h$ for $T$. 
Let $\mathcal{T}_h$ be a regular triangular mesh for $\Omega$ with mesh size $h$.
Let $V_h \subset H^1(\Omega)$ be the Lagrange finite element space associated with $\mathcal{T}_h$
and $V^B_h := V_h|_{\partial \Omega}$ be the restriction of $V_h$ on $\partial \Omega$. It is clear that $V^B_h \subset L^2(\partial \Omega)$.
The finite element formulation for the Steklov eigenvalue problem is to find $(\lambda_h, u_h) \in \mathbb C \times V_h$ such that
\begin{equation}\label{SteklovEDWeak}
(\nabla u_h, \nabla v_h) - k^2(nu_h, v_h) = -\lambda_h \langle u_h, v_h \rangle \quad \text{for all } v_h \in V_h.
\end{equation}

For the convergence of eigenvalues, we first study the source problem.
Given $g \in L^2(\partial \Omega)$, let $g_h$ be the projection of $g$ onto $V^B_h$. The discrete problem is to find $u_h \in V_h$ such that 
\begin{equation}\label{SteklovSDWeak}
(\nabla u_h, \nabla v_h) - k^2(nu_h, v_h) = \langle g_h, v_h \rangle \quad \text{for all } v_h \in V_h.
\end{equation}
In the rest of this section, we assume that $u \in H^2(\Omega)$ and the same regularity holds for the solution of the adjoint problem. This is the case when $\Omega$ is convex.
In general, when $\Omega$ is non-convex, $u$ does not belong to $H^2(\Omega)$.  
We refer the readers to \cite{Grisvard1985} for further discussions on the regularity of $u$, which is out the scope of the current paper. 

We have the following estimate for \eqref{SteklovSDWeak} (see Theorem 5.7.6 of \cite{BrennerScott2008}).
\begin{theorem}\label{ErrEstSource}
Let $u$ be the solution to \eqref{SteklovSWeak}. Assume that $k$ is not a Neumann eigenvalue. 
There exists a unique solution $u_h$ to \eqref{SteklovSDWeak} such that, for $h$ small enough,
\begin{equation}\label{uuhinfH1}
\|u-u_h\|_{H^1(\Omega)} \le C \inf_{v \in V_h} \|u-v\|_{H^1(\Omega)}
\end{equation}
and
\begin{equation}\label{uuhL2h}
\|u-u_h\|_{L^2(\Omega)} \le C h \|u-u_h\|_{H^1(\Omega)}.
\end{equation}
\end{theorem}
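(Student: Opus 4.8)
The plan is to prove Theorem~\ref{ErrEstSource} as a standard application of the $H^1$-ellipticity from Lemma~\ref{AisHelliptic} together with the Aubin--Nitsche duality argument, treating the shifted form $A(\cdot,\cdot)$ rather than $a(\cdot,\cdot)$ directly. The first step is to reformulate the source problem \eqref{SteklovSWeak} using $A(u,v)=a(u,v)+K(u,v)$, so that the variational equation reads $A(u,v)=\langle g,v\rangle + K(u,v)$ for all $v\in H^1(\Omega)$, and similarly $A(u_h,v_h)=\langle g_h,v_h\rangle + K(u_h,v_h)$ for all $v_h\in V_h$. Because $A$ is $H^1$-elliptic (hence bounded and coercive in the sense $|A(v,v)|\ge\alpha_0\|v\|^2_{H^1(\Omega)}$) and continuous, the Lax--Milgram theorem gives existence and uniqueness of $u_h$ at the discrete level, under the standing assumption that $k^2$ is not a Neumann eigenvalue (which guarantees well-posedness of the continuous problem and the unique solvability needed for the perturbation argument).

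For the $H^1$ quasi-optimality estimate \eqref{uuhinfH1}, I would invoke the C\'ea-type lemma for sesquilinear forms satisfying a G\aa rding inequality. The subtle point is that $a(\cdot,\cdot)$ itself is not coercive, only $A$ is; the standard route is to establish Galerkin orthogonality $a(u-u_h,v_h)=0$ for all $v_h\in V_h$, then exploit that the solution operator is well-defined for $h$ small enough. Concretely, one writes for arbitrary $v\in V_h$,
\[
\alpha_0\|u_h-v\|^2_{H^1(\Omega)}\le |A(u_h-v,u_h-v)|=|a(u_h-v,u_h-v)+K(u_h-v,u_h-v)|,
\]
and uses Galerkin orthogonality to replace $a(u_h-v,\cdot)$ by $a(u-v,\cdot)$, leaving a controllable term plus a lower-order $L^2$ term in $\|u-u_h\|_{L^2(\Omega)}$. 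The presence of that lower-order term is exactly why the estimate holds only ``for $h$ small enough'': one absorbs it using the $L^2$ bound \eqref{uuhL2h}, which must therefore be proved in tandem or bootstrapped. Since this is cited as Theorem~5.7.6 of \cite{BrennerScott2008}, I would point to that result rather than redo the absorption argument, but I would note that its hypotheses (a well-posed continuous adjoint problem with $H^2$ regularity, and approximability of $V_h$) are met here by the standing convexity/regularity assumption on $\Omega$ and the adjoint problem \eqref{adjoint}.

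The $L^2$ estimate \eqref{uuhL2h} is the classical Aubin--Nitsche duality step, and I expect the adjoint regularity to be the main thing to verify carefully rather than a computational obstacle. The plan is to introduce the dual problem: let $\phi\in H^1(\Omega)$ solve $a(w,\phi)=(u-u_h,w)$ for all $w\in H^1(\Omega)$, which is exactly the adjoint source problem \eqref{adjoint} with data $u-u_h\in L^2(\Omega)$, hence uniquely solvable with the elliptic regularity bound $\|\phi\|_{H^2(\Omega)}\le C\|u-u_h\|_{L^2(\Omega)}$ under the convexity assumption. Setting $w=u-u_h$ gives $\|u-u_h\|^2_{L^2(\Omega)}=a(u-u_h,\phi)$; Galerkin orthogonality lets us subtract any $\phi_h\in V_h$, so $\|u-u_h\|^2_{L^2(\Omega)}=a(u-u_h,\phi-\phi_h)$, and boundedness of $a$ together with the interpolation estimate $\inf_{\phi_h}\|\phi-\phi_h\|_{H^1(\Omega)}\le Ch\|\phi\|_{H^2(\Omega)}$ yields the factor $h$. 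The main obstacle, as flagged in the excerpt itself, is that this whole duality scheme rests on $H^2$ regularity of both the primal and adjoint solutions, which genuinely fails on non-convex polygons; I would therefore state the convexity hypothesis explicitly and defer the general case to \cite{Grisvard1985}, exactly as the paper signals.
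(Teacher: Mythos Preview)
Your approach is essentially the paper's own: Galerkin orthogonality for $a(\cdot,\cdot)$, G\aa rding's inequality to get the $H^1$ quasi-optimality modulo an $L^2$ term that is absorbed for small $h$, and Aubin--Nitsche duality for \eqref{uuhL2h}. Two small inaccuracies worth correcting: discrete existence does not follow from Lax--Milgram on the reformulated $A$-equation (the unknown $u_h$ appears on both sides), and the paper instead uses the finite-dimensional uniqueness-implies-existence argument; also, the dual problem $a(w,\phi)=(u-u_h,w)$ you invoke is not the adjoint Steklov source problem \eqref{adjoint} (which carries boundary data $\langle\cdot,g\rangle$) but a Neumann problem with $L^2(\Omega)$ volume data---the $H^2$ regularity you need still holds under the convexity assumption, so the argument goes through unchanged.
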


\begin{proof}
For the finite dimensional problem \eqref{SteklovSDWeak}, existence of a solution can be established using uniqueness.
Assuming that there exists a nontrivial solution $u_h$ to \eqref{SteklovSDWeak} for $g_h=0$. For the continuous problem,
$g=0$ implies that the solution $u=0$. Then \eqref{ErrH1} asserts that $u_h=0$.  Thus the uniqueness holds,
which implies the existence of the solution $u_h$ as well.

Using \eqref{SteklovSWeak} and \eqref{SteklovSDWeak}, one has the Galerkin orthogonality
\[
a(u-u_h, v_h) = 0 \quad \text{for all } v_h \in V_h.
\]
The G{\aa}rding's inequality \eqref{Garding} implies that
\begin{eqnarray*}
\alpha_0 \|u-u_h\|^2_{H^1(\Omega)} & \le & |a(u-u_h, u-u_h) + K(u-u_h, u-u_h)| \\
&=&| a(u-u_h, u-v_h) + K\|u-u_h\|^2_{L^2(\Omega)} | \\
&\le& C \|u-u_h\|_{H^1(\Omega)} \|u-v_h\|_{H^1(\Omega)}  + K\|u-u_h\|^2_{L^2(\Omega)}.
\end{eqnarray*}
Assume that the estimate \eqref{uuhL2h} holds, i.e., 
\begin{equation}\label{L2errest}
\|u-u_h\|_{L^2(\Omega)} \le C_1h \|u-u_h\|_{H^1(\Omega)}
\end{equation}
for some constant $C_1 > 0$. One has that 
\begin{equation}\label{ErrH1}
\alpha_0 \|u-u_h\|^2_{H^1(\Omega)}  \le C \|u-u_h\|_{H^1(\Omega)} \|u-v_h\|_{H^1(\Omega)}  + KC_1h^2 \|u-u_h\|^2_{H^1(\Omega)}.
\end{equation}
Then for $h$ small enough, we obtain
\[
\|u-u_h\|_{H^1(\Omega)} \le C \inf_{v \in V_h} \|u-v_h\|_{H^1(\Omega)} \quad \text{for all } v_h \in V_h.
\]

The rest of the proof is devoted to verify \eqref{uuhL2h}.
Let $w$ be the solution to the adjoint problem
\[
a(v, w) = (u-u_h, v) \quad \text{for all } v \in V. 
\]
Then, for any $w_h \in V_h$,
\begin{eqnarray*}
(u-u_h, u-u_h)  &=& a(u-u_h, w) \\
		&=& a(u-u_h, w-w_h) \\
		&\le& C\|u-u_h\|_{H^1(\Omega)} \|w-w_h\|_{H^1(\Omega)}\\
		&\le& Ch \|u-u_h\|_{H^1(\Omega)} |w|_{H^2(\Omega)} \\
		&\le& Ch \|u-u_h\|_{H^1(\Omega)} \|u-u_h\|_{L^2(\Omega)},
\end{eqnarray*}
where we have used the regularity of the solution for the adjoint problem. Consequently,
\[
\|u-u_h\|_{L^2(\Omega)} \le C h \|u-u_h\|_{H^1(\Omega)}.
\]
\end{proof}

As a result, problem \eqref{SteklovSDWeak} defines a discrete operator $T_h: L^2(\partial \Omega) \to V^B_h$ such that 
\begin{equation}\label{Thguh}
 T_h g =  u_h|_{V^B_h}.
 \end{equation}
The following theorem shows that $T_h$ converges to $T$ in norm in $L^2(\partial \Omega)$. 
\begin{theorem}\label{TThL2} Assume that $g \in H^{1/2}(\partial \Omega) \subset L^2(\partial \Omega)$. 
Let $T$ and $T_h$ be defined as in \eqref{NtDmapping} and \eqref{Thguh} using linear Lagrange element, respectively. Then
\begin{equation}\label{TThL2E}
\|T-T_h\|_{L^2(\partial \Omega), L^2(\partial \Omega)} \le C h^{3/2}.
\end{equation}
\end{theorem}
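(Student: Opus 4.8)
The plan is to reduce the operator-norm estimate to a boundary $L^2$ estimate for the source-problem error and then interpolate between the $L^2(\Omega)$ and $H^1(\Omega)$ estimates already established in Theorem~\ref{ErrEstSource}. First I would observe that by definition $(T-T_h)g = (u-u_h)|_{\partial\Omega}$, where $u$ solves \eqref{SteklovSWeak} and $u_h$ solves \eqref{SteklovSDWeak}, so that
\[
\|(T-T_h)g\|_{L^2(\partial\Omega)} = \|u-u_h\|_{L^2(\partial\Omega)}.
\]
Since $g_h$ is the $L^2(\partial\Omega)$-projection of $g$ onto $V^B_h$ and every discrete test function restricts to an element of $V^B_h$, the Galerkin orthogonality $a(u-u_h,v_h)=0$ holds for all $v_h\in V_h$ exactly as used in the proof of Theorem~\ref{ErrEstSource}; in particular no additional consistency term coming from $g-g_h$ appears, so the estimates \eqref{uuhinfH1} and \eqref{uuhL2h} are available verbatim.

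Next I would invoke the multiplicative trace inequality
\[
\|v\|_{L^2(\partial\Omega)} \le C\, \|v\|_{L^2(\Omega)}^{1/2}\,\|v\|_{H^1(\Omega)}^{1/2}, \qquad v \in H^1(\Omega),
\]
applied to $v = u-u_h$. It then remains to supply the two factors. The regularity assumption $u\in H^2(\Omega)$ together with the elliptic estimate $\|u\|_{H^2(\Omega)}\le C\|g\|_{H^{1/2}(\partial\Omega)}$ (which is exactly where the hypothesis $g\in H^{1/2}(\partial\Omega)$ enters) and the standard linear-interpolation bound give, via \eqref{uuhinfH1},
\[
\|u-u_h\|_{H^1(\Omega)} \le C\,\inf_{v\in V_h}\|u-v\|_{H^1(\Omega)} \le C h\,|u|_{H^2(\Omega)} \le C h\,\|g\|_{H^{1/2}(\partial\Omega)},
\]
and then \eqref{uuhL2h} yields $\|u-u_h\|_{L^2(\Omega)} \le C h\,\|u-u_h\|_{H^1(\Omega)} \le C h^2\,\|g\|_{H^{1/2}(\partial\Omega)}$.

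Finally I would combine these in the trace inequality,
\[
\|u-u_h\|_{L^2(\partial\Omega)} \le C\,\big(h^2\big)^{1/2}\big(h\big)^{1/2}\,\|g\|_{H^{1/2}(\partial\Omega)} = C h^{3/2}\,\|g\|_{H^{1/2}(\partial\Omega)},
\]
which is the asserted rate \eqref{TThL2E}. The main obstacle is not the interpolation bookkeeping but justifying the $H^2$ elliptic regularity and the resulting $O(h^2)$ rate in $L^2(\Omega)$: the exponent $3/2$ genuinely relies on full $H^2$ regularity of $u$ (and of the adjoint solution, which enters \eqref{uuhL2h} through the Aubin--Nitsche duality argument), valid only under the convexity/smoothness assumptions made earlier, and it forces the data to be measured in $H^{1/2}(\partial\Omega)$ rather than in $L^2(\partial\Omega)$. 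I would therefore read \eqref{TThL2E} as the sharp statement $\|(T-T_h)g\|_{L^2(\partial\Omega)} \le C h^{3/2}\|g\|_{H^{1/2}(\partial\Omega)}$, which is the precise content behind the operator-norm notation and which suffices for the spectral-perturbation argument since $T$ maps $L^2(\partial\Omega)$ boundedly into $H^{1/2}(\partial\Omega)$.
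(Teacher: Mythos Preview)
Your proof is correct and follows essentially the same route as the paper: identify $(T-T_h)g$ with the boundary trace of $u-u_h$, apply the multiplicative trace inequality $\|v\|_{L^2(\partial\Omega)}\le C\|v\|_{L^2(\Omega)}^{1/2}\|v\|_{H^1(\Omega)}^{1/2}$, and feed in the $O(h)$ and $O(h^2)$ bounds from Theorem~\ref{ErrEstSource}. The only notable difference is that you measure the data in $\|g\|_{H^{1/2}(\partial\Omega)}$ to justify the $H^2$ regularity of $u$, whereas the paper writes $\|g\|_{L^2(\partial\Omega)}$ throughout; your version is the more cautious reading of the elliptic shift theorem, but the argument is otherwise identical.
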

\begin{proof}
Using the approximation property of the linear Lagrange finite element (see Eqn. (3.9) of \cite{SunZhou2016}), for $u \in H^2(\Omega)$, one has that
\begin{equation}\label{infvVh}
 \inf_{v \in V_h} \|u-v\|_{H^1(\Omega)} \le Ch\|u\|_{H^2(\Omega)} \le C h\|g\|_{L^2(\partial \Omega)}.
\end{equation}
Therefore, by Theorem \ref{ErrEstSource}, 
\[
\|u-u_h\|_{L^2(\Omega)} \le C h^2 \|g\|_{L^2(\partial \Omega)}.
\]
One has that
\begin{eqnarray*}
\|(T-T_h)g\|_{L^2(\partial \Omega)} &=& \|(u-u_h)|_{\partial \Omega}\|_{L^2(\partial \Omega)} \\
	&\le& C \|u-u_h\|_{L^2(\Omega)}^{1/2}  \|u-u_h\|_{H^1(\Omega)}^{1/2} \\
	&\le& Ch^{3/2}\|g\|_{L^2(\partial \Omega)},
\end{eqnarray*}
where we have applied Theorem~\ref{ErrEstSource} and the trace theorem (Theorem 1.6.6 in \cite{BrennerScott2008}). Hence \eqref{TThL2E} follows immediately and the proof is complete.
\end{proof}

Similarly, the discrete problem for the adjoint problem \eqref{adjoint} can be defined as follows. Given $g \in L^2(\partial \Omega)$, find $v_h \in H^1(\Omega)$ such that
\begin{equation}\label{SteklovSADWeak}
(\nabla u_h, \nabla v_h) - k^2(nu_h, v_h) = \langle u_h, g \rangle \quad \text{for all } u_h \in V_h.
\end{equation}
Then all results in this section also hold for the adjoint problem \eqref{adjoint}. In particular, one has that discrete adjoint operator  $T^*_h: L^2(\partial \Omega) \to V^B_h$ such that $ T_h g_h = v_h|_{V^B_h} $ 
where $v_h$ the solution of \eqref{SteklovSADWeak} and
\begin{equation}\label{TstartTh}
\|T^*-T^*_h\|_{L^2(\partial \Omega), L^2(\partial \Omega)} \le C h^{3/2}.
\end{equation}

The rest of the section is devoted to the finite element spectral approximation of Steklov eigenvalues based on the theory in \cite{Osborn1975MC}.
We shall need some preliminaries on the spectral theory of compact operators (see, e.g., \cite{Kato1966}). 
Let $T: X \to X$ be a compact operator on a complex Hilbert space $X$.
Let $z \in \mathbb C$. The resolvent operator of $T$ is defined as 
\begin{equation}\label{resolventT}
R_z(T) = (z-T)^{-1}.
\end{equation}
The resolvent set of $T$ is 
\begin{equation}\label{rhoT}
\rho(T)=\{ z \in \mathbb C: (z-T)^{-1} \text{ exists and is bounded} \}.
\end{equation}
The spectrum of $T$ is $\sigma(T)=\mathbb C \setminus \rho(T)$.

Since $T$ is compact, each $\mu \in \sigma(T)$ is an isolated eigenvalue of $T$ and the generalized
eigenspace associated with $\mu$ is finite dimensional.
Furthermore, there exists a smallest positive integer $\alpha$ such that
\[
\mathcal{N}\left((\mu-T)^\alpha\right) = \mathcal{N}\left((\mu-T)^{\alpha+1}\right),
\]
where $\mathcal{N}$ denotes the null space.
The integer $m=\dim  \mathcal{N}\left((\mu-T)^\alpha \right)$ is called the algebraic multiplicity of $\mu$. 
The functions in  $\mathcal{N}\left((\mu-T)^\alpha \right)$ are called the generalized eigenfunctions
of $T$ corresponding to $\mu$. Note that the geometric multiplicity of $\mu$ is defined as $\dim \mathcal{N}(\mu - T)$.

Let $\Gamma$ be a simple closed curve on the complex plane $\mathbb C$ lying in $\rho(T)$, which contains an eigenvalue $\mu$ and no other eigenvalues.
Let the algebraic multiplicity of $\mu$ be $m$.
The spectral projection is defined by
\[
E(\mu):= \frac{1}{2\pi i} \int_\Gamma R_z(T) dz.
\]
It is well-known that $E$ is a projection onto  the space spanned by the generalized eigenfunctions ${\phi}_j, j=1,\ldots, m$ associated with $\mu$, i.e.,
the range of $E$, $\mathcal{R}(E)$, coincides wth $ \mathcal{N}\left((\mu-T)^\alpha \right)$. 

Since $T_h$ converges to $T$ in norm as $h \to 0$, $\Gamma \subset \rho(T_h)$ for $h$ small enough. 
In addition, there exists $m$ eigenvalues $\mu_h^1, \ldots, \mu_h^m$ of $T_h$ inside $\Gamma$ such that
\[
\lim_{h \to 0} \mu^j_h = \mu_j \quad \text{for } j = 1, \ldots, m.
\]
The spectral projection
\[
E_h(\mu):= \frac{1}{2\pi i} \int_\Gamma R_z(T_h) dz
\] 
converges to $E$ pointwise and $\text{dim}\mathcal{R}(E_h) =  \text{dim}\mathcal{R}(E)$. 

If $\mu$ is an eigenvalue of $T$, then $\overline{\mu}$ is an eigenvalue of $T^*$. Let $\phi_1, \ldots, \phi_m$ be a basis for ${\mathcal R}(E)$ and
$\phi_1^*, \ldots, \phi_m^*$ be the dual basis to $\phi_1, \ldots, \phi_m$ (see Section 1.1 of \cite{SunZhou2016}). The following lemma (Theorem 3 of \cite{Osborn1975MC})
will be used to prove the convergence of Steklov eigenvalues.
\begin{lemma}
Let $\mu$ be an eigenvalue of $T$ with algebraic multiplicity $m$. Let $\mu_h^1, \ldots, \mu_h^m$ be the $m$ eigenvalues of $T_h$ converge to $\mu$ and define
\[
\hat{\mu}_h = \frac{1}{m} \sum_{j=1}^m \mu_h^j.
\]
Then there exists a constant $C$ such that
\begin{equation}\label{mumuh}
|\mu - \hat{\mu}_h| \le \frac{1}{m} \sum_{j=1}^m |\langle (T-T_h) \phi_j, \phi_j^* \rangle| + C \|(T-T_h)|_{{\mathcal R}(E)}\| \, \|(T^*-T^*_h)|_{{\mathcal R}(E^*)}\|,
\end{equation}
where ${\mathcal R}(E^*) = \text{span}\{ \phi_1^*, \ldots, \phi_m^*\}$.
\end{lemma}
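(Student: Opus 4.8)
The plan is to establish the bound \eqref{mumuh} by expressing the averaged eigenvalue error $\mu-\hat\mu_h$ through the spectral projections $E$ and $E_h$, and then estimating the resulting terms using the operator convergence $\|T-T_h\|$ and $\|T^*-T^*_h\|$ already quantified in \eqref{TThL2E} and \eqref{TstartTh}. The starting point is the identity
\[
\mu - \hat\mu_h = \frac{1}{m}\,\mathrm{tr}\!\left[(\mu - T)\,E\right] - \frac{1}{m}\sum_{j=1}^m \mu_h^j,
\]
together with the fact that $\frac{1}{m}\sum_j \mu_h^j = \frac{1}{m}\,\mathrm{tr}\!\left[T_h E_h\right]$ and $\mu = \frac{1}{m}\,\mathrm{tr}\!\left[T E\right]$, since $E$ and $E_h$ project onto the $m$-dimensional generalized eigenspaces. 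First I would use the dual basis to write the trace concretely: with $\{\phi_j\}$ a basis of $\mathcal R(E)$ and $\{\phi_j^*\}$ the dual basis in $\mathcal R(E^*)$, one has
\[
\mu - \hat\mu_h = \frac{1}{m}\sum_{j=1}^m \Big( \langle T\phi_j, \phi_j^*\rangle - \langle T_h\, E_h\phi_j, \phi_j^*\rangle \Big).
\]

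Next I would decompose each summand by inserting and subtracting the ``diagonal'' term $\langle T_h\phi_j,\phi_j^*\rangle$. This splits the error into a first-order piece $\langle (T-T_h)\phi_j,\phi_j^*\rangle$ — which is exactly what appears in the first sum on the right-hand side of \eqref{mumuh} — and a remainder involving $\langle T_h(E_h - I)\phi_j,\phi_j^*\rangle$. The remainder is the delicate part. The idea is that $(E_h - E)\phi_j$ measures how far the discrete invariant subspace is from the exact one, and this difference is itself controlled by $\|T-T_h\|$ restricted to $\mathcal R(E)$, via the contour-integral representations of $E$ and $E_h$ and the resolvent identity $R_z(T_h) - R_z(T) = R_z(T_h)(T_h - T)R_z(T)$. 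Since $\phi_j^* \in \mathcal R(E^*)$, pairing the remainder against $\phi_j^*$ introduces a second factor that can be transferred onto the adjoint operators, yielding the product $\|(T-T_h)|_{\mathcal R(E)}\|\,\|(T^*-T^*_h)|_{\mathcal R(E^*)}\|$. This quadratic structure is the reason \eqref{mumuh} improves on a naive first-order estimate.

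The main obstacle I anticipate is making the superconvergence of the remainder term rigorous: one must show that the bilinear contribution $\langle T_h(E_h-E)\phi_j,\phi_j^*\rangle$ factors as a genuine product of a $T$-error and a $T^*$-error, rather than merely being bounded by $\|T-T_h\|$ squared. The cleanest route is Osborn's argument — exploiting that $\phi_j^*$ is a generalized eigenfunction of $T^*$ with eigenvalue $\overline\mu$, so that the resolvent acting on $\phi_j^*$ can be handled symmetrically, and then bounding the two resolvent factors separately, one on $\mathcal R(E)$ and its adjoint counterpart on $\mathcal R(E^*)$. Since this is precisely the statement of Theorem~3 of \cite{Osborn1975MC}, the proof here reduces to invoking that abstract result once the hypotheses (norm convergence $T_h\to T$, compactness of $T$, and the dual-basis setup) have been verified — all of which are supplied by the discussion preceding the lemma. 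I would therefore keep the proof short, citing \cite{Osborn1975MC} for the abstract estimate and verifying only that our operators $T,T_h$ meet its assumptions.
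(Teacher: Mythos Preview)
Your proposal is correct and aligns with the paper's treatment: the paper does not prove this lemma at all but simply states it as Theorem~3 of \cite{Osborn1975MC}, and you arrive at the same conclusion---cite Osborn's abstract result after checking that $T$ is compact and $T_h\to T$ in norm. Your additional sketch of the trace identity and the resolvent-based superconvergence argument is accurate and more informative than what the paper provides, but it is not required here.
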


Using the convergence results of the finite element method for the source problem and the above lemma, we have the following theorem.
\begin{theorem}
Let $\mu$ be an eigenvalue of $T$ with multiplicity $m$ and $\mu_h^j, j=1,\ldots,m$ be the $m$ eigenvalues of $T_h$ approximating $\mu$. Then there exists a constant $C$, independent of $h$,
such that
\[
|\mu - \hat{\mu}_{h}| \le C h^2, \qquad \text{where } \hat{\mu}_{h} = \frac{1}{m} \sum_{j=1}^m \mu_h^j.
\]
\end{theorem}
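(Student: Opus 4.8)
The plan is to invoke the Osborn estimate \eqref{mumuh}, which decomposes the averaged eigenvalue error $|\mu-\hat\mu_h|$ into a sum of pairings and a product of restricted operator norms, and to show that each piece is $O(h^2)$. The product term is the easy one: restricting the operators to the finite-dimensional ranges $\mathcal{R}(E)$ and $\mathcal{R}(E^*)$ and inserting the norm convergence rates of Theorem~\ref{TThL2} and \eqref{TstartTh}, I would estimate
\[
\|(T-T_h)|_{\mathcal{R}(E)}\|\,\|(T^*-T^*_h)|_{\mathcal{R}(E^*)}\| \le C h^{3/2}\cdot h^{3/2} = C h^{3} \le C h^{2}
\]
for $h\le 1$, so this contribution is in fact of higher order than required.

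The genuine work lies in the first term, $\tfrac{1}{m}\sum_{j=1}^m|\langle (T-T_h)\phi_j,\phi_j^*\rangle|$. A crude bound via $\|T-T_h\|\le Ch^{3/2}$ yields only $O(h^{3/2})$, so I must recover an extra half power through an Aubin--Nitsche duality argument that exploits the smoothness of the \emph{fixed} functions $\phi_j,\phi_j^*$. Writing $u=Tg$ for the source solution with boundary data $g:=\phi_j$ and $u_h=T_h\phi_j$, one has $(T-T_h)\phi_j=(u-u_h)|_{\partial\Omega}$, so the pairing equals $\langle u-u_h,\phi_j^*\rangle$ on $\partial\Omega$. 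I would then introduce the adjoint source problem with data $\psi:=\phi_j^*$, namely find $w\in H^1(\Omega)$ with $a(v,w)=\langle v,\psi\rangle$ for all $v\in H^1(\Omega)$; elliptic regularity gives $w\in H^2(\Omega)$ with $\|w\|_{H^2(\Omega)}\le C\|\psi\|$. Taking $v=u-u_h$ and using the Galerkin orthogonality $a(u-u_h,w_h)=0$ for all $w_h\in V_h$ established in the proof of Theorem~\ref{ErrEstSource}, I obtain
\[
\langle u-u_h,\psi\rangle = a(u-u_h,w) = a(u-u_h,w-w_h) \le C\|u-u_h\|_{H^1(\Omega)}\,\|w-w_h\|_{H^1(\Omega)}.
\]
Choosing $w_h$ to be the linear interpolant of $w$ and combining \eqref{infvVh} with the $H^1$ estimate \eqref{uuhinfH1} of Theorem~\ref{ErrEstSource} gives $\|u-u_h\|_{H^1(\Omega)}\le Ch\|\phi_j\|$ and $\|w-w_h\|_{H^1(\Omega)}\le Ch\|\phi_j^*\|$, whence $|\langle(T-T_h)\phi_j,\phi_j^*\rangle|\le Ch^2\|\phi_j\|\,\|\phi_j^*\|$. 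Since the $\phi_j,\phi_j^*$ are fixed, summing over $j$ preserves the $O(h^2)$ rate.

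Inserting both bounds into \eqref{mumuh} then yields $|\mu-\hat\mu_h|\le Ch^2$. The main obstacle is exactly this sharp control of the first term: the trace-and-interpolation bound of Theorem~\ref{TThL2}, which is calibrated for the operator norm on $L^2(\partial\Omega)$, loses half a power and returns only $h^{3/2}$; it is the duality against the smooth eigenfunction $\phi_j^*$ --- replacing a boundary $L^2$ bound by the full interior product $\|u-u_h\|_{H^1}\,\|w-w_h\|_{H^1}\sim h\cdot h$ --- that restores the quadratic rate. A minor point to check along the way is that $\phi_j,\phi_j^*$ are admissible data for the $H^2$ regularity estimate, which holds because $T$ and $T^*$ map $L^2(\partial\Omega)$ boundedly into $H^{1/2}(\partial\Omega)$, so every member of $\mathcal{R}(E)$ and $\mathcal{R}(E^*)$ inherits enough boundary regularity.
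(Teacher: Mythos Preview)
Your proposal is correct and follows essentially the same route as the paper: both invoke Osborn's estimate \eqref{mumuh}, dispatch the product term via the $O(h^{3/2})$ operator-norm bounds of Theorem~\ref{TThL2} and \eqref{TstartTh}, and for each pairing $\langle (T-T_h)\phi_j,\phi_j^*\rangle$ introduce the adjoint source problem with data $\phi_j^*$ and use Galerkin orthogonality to rewrite the boundary pairing as $a(u_j-u_{h,j},\,w-w_h)$, which is then bounded by the product of two $O(h)$ $H^1$-errors. The only cosmetic difference is that you take $w_h$ to be the interpolant of $w$ (using primal Galerkin orthogonality), whereas the paper takes $w_h=u_{h,j}^*$ the discrete adjoint solution (using adjoint Galerkin orthogonality); either choice yields the same $Ch^2$ bound.
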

\begin{proof} Let $u_j$ and $u_{h,j}$ be the solutions of \eqref{SteklovSWeak} and \eqref{SteklovSDWeak} with right hand side being $\phi_j$, respectively.
Let $u_j^*$ and $u_{h,j}^*$ be the solution of the adjoint problem \eqref{adjoint} and the corresponding finite element solution with right hand side being $\phi_j^*$, respectively.
In view of Theorem \ref{TThL2} and \eqref{TstartTh}, we only need to estimate the first term of \eqref{mumuh}.
\begin{eqnarray*}
|\langle (T-T_h) \phi_j, \phi_j^*\rangle| &=&  |\langle \phi_j, (T^*-T^*_h) \phi_j^* \rangle| \\	
				&=& | a(u_j,  u_j^*-u_{h,h}^*)| \\
				&=& | a( u_h -u_{h,j},  u_j^*-u_{h,j}^*)| \\
				&\le& C \| u_h -u_{h,j}\|_{H^1(\Omega)}  \|u_j^*-u_{h,j}^*)\|_{H^1(\Omega)}\\
				&\le& Ch^2.
\end{eqnarray*}
\end{proof}

As a consequence, we have the following convergence result on the Steklov eigenvalues.
\begin{theorem}
Let $\lambda$ be a Steklov eigenvalue with multiplicity $m$ and $\lambda_h^j, j=1,\ldots,m$ be the $m$ discrete eigenvalues of \eqref{SteklovEDWeak} approximating $\lambda$. 
Then there exists a constant $C$, independent of $h$,
such that
\[
|\lambda - \hat{\lambda}_{h}| \le C h^2, \qquad \text{where } \hat{\lambda}_{h} = \frac{1}{m} \sum_{j=1}^m \lambda_h^j.
\]
\end{theorem}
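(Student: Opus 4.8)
The plan is to deduce the Steklov estimate from the Neumann-to-Dirichlet estimate already established, exploiting the exact algebraic correspondence $\lambda = -1/\mu$ and $\lambda_h^j = -1/\mu_h^j$ between the two spectra. Since $\lambda$ is a finite Steklov eigenvalue, the corresponding eigenvalue $\mu = -1/\lambda$ of $T$ satisfies $\mu \neq 0$; moreover, because $T_h \to T$ in norm, the approximating eigenvalues $\mu_h^j$ converge to $\mu$ and therefore stay bounded away from $0$ for $h$ small. Consequently the map $z \mapsto -1/z$ is analytic and Lipschitz on a fixed neighborhood of $\mu$ that contains all the $\mu_h^j$, which is precisely what makes the transfer of the rate possible.

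First I would expand the per-eigenvalue error. Writing
\[
\lambda_h^j - \lambda = -\frac{1}{\mu_h^j} + \frac{1}{\mu} = \frac{\mu_h^j - \mu}{\mu\,\mu_h^j},
\]
and using $\frac{1}{\mu_h^j} = \frac{1}{\mu} - \frac{\mu_h^j - \mu}{\mu\,\mu_h^j}$ to isolate the part linear in the deviation, I obtain
\[
\lambda_h^j - \lambda = \frac{1}{\mu^2}(\mu_h^j - \mu) - \frac{1}{\mu^2}\,\frac{(\mu_h^j - \mu)^2}{\mu_h^j}.
\]
Averaging over $j = 1, \ldots, m$ then gives
\[
\hat{\lambda}_h - \lambda = \frac{1}{\mu^2}(\hat{\mu}_h - \mu) - \frac{1}{m\mu^2}\sum_{j=1}^m \frac{(\mu_h^j - \mu)^2}{\mu_h^j}.
\]
The first term is controlled directly by the preceding theorem, $|\hat{\mu}_h - \mu| \le C h^2$, so it is $O(h^2)$ with a constant depending only on $|\mu|$.

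The main obstacle is the quadratic remainder $\frac{1}{m\mu^2}\sum_j (\mu_h^j - \mu)^2/\mu_h^j$, which couples the \emph{individual} eigenvalue errors rather than their mean, so the sharp averaging that yields the $h^2$ rate for $\hat{\mu}_h$ is no longer available for it. To bound it I would invoke the standard individual-eigenvalue estimate from the spectral theory of compact operators (see \cite{Osborn1975MC}), namely $|\mu - \mu_h^j| \le C\,\|(T - T_h)|_{\mathcal{R}(E)}\|^{1/\alpha}$, where $\alpha$ is the ascent of $\mu - T$; combined with Theorem~\ref{TThL2} this gives $|\mu - \mu_h^j| \le C h^{3/(2\alpha)}$. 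When $\mu$ is semisimple ($\alpha = 1$), as is the case for the eigenvalues treated here, each deviation is $O(h^{3/2})$, so the remainder is $O(h^3)$ and is absorbed into the $O(h^2)$ contribution of the linear term, completing the proof. (For $\alpha \ge 2$ this direct argument would only deliver a weaker rate, so semisimplicity of $\mu$ is the hypothesis that makes the clean $h^2$ estimate go through.) Since $|\mu_h^j|$ is bounded below uniformly in $h$, all constants are independent of $h$, and the claimed bound $|\lambda - \hat{\lambda}_h| \le C h^2$ follows.
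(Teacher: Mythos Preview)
Your argument is correct and is precisely the derivation the paper has in mind: the paper offers no separate proof, presenting the result simply ``as a consequence'' of the preceding estimate $|\mu-\hat\mu_h|\le Ch^2$ together with the relation $\lambda=-1/\mu$. You are in fact more careful than the paper in isolating the quadratic remainder $\tfrac{1}{m\mu^2}\sum_j(\mu_h^j-\mu)^2/\mu_h^j$ and noting that controlling it through Osborn's individual-eigenvalue bound requires semisimplicity of $\mu$; the paper does not address this subtlety at all.
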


\section{Spectral Indicator Method} 
When $n(x)$ is complex, the Steklov eigenvalue problem is non-selfadjoint.
The above finite element method leads to a non-Hermitian matrix eigenvalue problem.
Due to the lack of a priori spectral information, classical methods do not work effectively. To this end, 
we extend the spectral indicator method {\bf RIM}, which was proposed recently in \cite{Huang2016JCP} (see also \cite{Huang2017}) for the 
non-selfadjoint transmission eigenvalue problem \cite{ColtonMonkSun2010IP, Sun2011SIAMNA},
to compute (complex) Steklov eigenvalues in a given region on the complex plane $\mathbb C$.

The matrix form for \eqref{SteklovEDWeak} is given by
\begin{equation}\label{MF}
(G-k^2 M_n) {\boldsymbol u} = -\lambda M_{\partial \Omega} {\boldsymbol u},
\end{equation}
where $G$ is the stiffness matrix, $M_n$ is the mass matrix, $M_{\partial \Omega}$ is the mass matrix on $\partial \Omega$.
The standard way is to approximate $\lambda's$ by solving the generalized matrix eigenvalue problem \eqref{MF}. 

We first introduce {\bf RIM} proposed in \cite{Huang2016JCP} for the generalized eigenvalue problem 
\begin{equation}\label{AxLambdaBx}
A{\boldsymbol x} = \lambda B{\boldsymbol x},
\end{equation}
where $A=(G-k^2 M_n)$ and $B = -M_{\partial \Omega}$.

Let $S \subset \mathbb C$ be a simply connect domain and $\Gamma=\partial S$. 
The problem of interest is to compute all eigenvalues of \eqref{AxLambdaBx} in $S$.
Let ${\boldsymbol g}$ be a random vector. From the previous section, the spectral projection of  ${\boldsymbol g}$ is defined as
\[
E {\boldsymbol g} =\frac{1}{2\pi i} \int_\Gamma R_z(A,B) {\boldsymbol g} dz=  \frac{1}{2\pi i} \int_\Gamma (A-zB)^{-1}{\boldsymbol g} dz,
\]

The idea behind {\bf RIM} is very simple. The spectral projection $E {\boldsymbol g}$ can be used to
to decide if there exist eigenvalues in $S$ or not. If there are no eigenvalues inside $\Gamma$, $|E {\boldsymbol g}| = 0$. 
Otherwise, if there exist $m$ eigenvalues $\lambda_j, j = 1, \ldots, m,$ $|E {\boldsymbol g}| \ne 0$.

Without loss of generality, let $S$ be a square. $E{\boldsymbol g}$ can be approximated using a quadrature
\begin{equation}\label{XLXf}
E{\boldsymbol g} \approx  \dfrac{1}{2 \pi i} \sum_{j=1}^W \omega_j {\boldsymbol x}_j,
\end{equation}
where $\omega_j$'s are quadrature weights and ${\boldsymbol x}_j$'s are the solutions of the linear systems
\begin{equation}\label{linearsys}
(A- z_jB){\boldsymbol x}_j = {\boldsymbol g}, \quad j = 1, \ldots, W.
\end{equation}
Recall that if there is no eigenvalue inside $\Gamma$, then 
$E{\boldsymbol g} = {\bf 0}$ for all 
${\boldsymbol g} \in \mathbb C^n$. Hence $|E{\boldsymbol g}|$ can be used as an indicator of $S$.
However, in practice, it is difficult to distinguish between $|E{\boldsymbol g}| \ne 0$ and $|E{\boldsymbol g}|= 0$. 
The solution in \cite{Huang2016JCP} is to normalize
$E{\boldsymbol g}$ and project it again. 
The indicator $\delta_S$ is set to be
\begin{equation}\label{sigmaP2P}
\delta_S := \left | E \left( \frac{E {\boldsymbol g}}{|E {\boldsymbol g}|}\right)\right|.
\end{equation}
Since we use numerical quadratures, $\delta_S \approx 1$ if there exist eigenvalues in $S$.
In this case, $S$ is divided into sub-regions
and indicators for these sub-regions are computed. The procedure continues until the size of the region is smaller than 
a specified precision $d_0$ (e.g., $d_0 = 10^{-9}$).
Then the centers of the regions are the approximations of eigenvalues.

According to \eqref{sigmaP2P}, $\delta_S = 1$ if there exists at least one eigenvalue in $S$ and $\delta_S = 0$ if there is no eigenvalue in $S$.
Since only "yes" ($\delta_S = 1$) or "no" ($\delta_S = 0$) is needed in the algorithm and quadrature is used to evaluate $E{\boldsymbol g}$, it is natural to use
a threshold $\delta_0$ to distiguish "yes" and "no". 
Let ${\boldsymbol g}$ be a random vector and $\delta_0, 0 < \delta_0 < 1$, be a threshold value.
The following is the basic algorithm for {\bf RIM}.
\vskip 0.2cm

\begin{itemize}
\item[] {\bf RIM}$(A, B, S, d_0, \delta_0, {\boldsymbol g})$
\item[]{\bf Input:}  matrices $A, B$, region $S$, precision $d_0$, threshold $\delta_0$, random vector ${\boldsymbol g}$.
\item[]{\bf Output:}  generalized eigenvalue(s) $\lambda$ inside $S$ 
\vskip 0.1cm
\item[1.] Compute ${\delta_S}$ using \eqref{sigmaP2P}, \eqref{XLXf} and \eqref{linearsys}.
\item[2.] Decide if $S$ contains eigenvalue(s).
	\begin{itemize}
		\item If $\delta_S < \delta_0$, then exit.
		\item Otherwise, compute the size $h(S)$ of $S$.
			\begin{itemize}
				\item[-] If $h(S)  > d_0 $, 
						\begin{itemize}
						\item[] partition $S$ into subregions $S_j, j=1, \ldots J$.
						\item[] for $j=1: J$
						\item[] $\qquad${\bf RIM}$(A, B, S_j, d_0, \delta_0, {\boldsymbol g})$.
						\item[] end
						\end{itemize}
				\item[-] If $h(S) \le d_0$, 
						\begin{itemize}
							\item[] set $\lambda$ to be the center of $S$.
							\item[] output $\lambda$ and exit.
						\end{itemize}
			\end{itemize}
	\end{itemize}
\end{itemize}
\vskip 0.2cm

The computational cost of {\bf RIM} mainly comes from solving the linear system \eqref{linearsys} at each quadrature point.
Note that these matrices are $N \times N$, where $N$ is the number of vertices of the triangular mesh if linear Lagrange element is used.
Furthermore, for robustness, the strategy of {\bf RIM} in \cite{Huang2016JCP} selects a small threshold $\delta_0=0.1$, i.e., $S$ contains eigenvalue(s) whenever $\delta_S > \delta_0$.
This choice of threshold for selecting a region systematically leans towards further investigation of regions that may potentially contain eigenvalues.
Such a strategy leads to more linear systems.

To employ {\bf RIM} in a more efficient way for Steklov eigenvalues,
we consider an alternative matrix eigenvalue problem of a much smaller size by using a matrix version of $T_h$ directly.
This is possible due to the fact that the eigenvalues appear in the boundary condition such that one can rewrite the system
as an eigenvalue problem involving degrees of freedom only related to the boundary of the domain. Consequently, the size of the problem is reduced significantly.

From the finite element approximation,  $T_h: V^B_h \to V^B_h$ has the following matrix form, which is also denoted by $T_h$, 
\[
T_h : = I_h (G-k^2 M_n)^{-1} M_{\partial \Omega},
\] 
where $I_h$ corresponds the restriction of a function in $V_h$ onto $V^B_h$. $T_h$ is an $M\times M$ matrix where $M$
is the number of vertices on $\partial \Omega$. Clearly, $M \ll N$ and one only needs to consider a much smaller eigenvalue problem than \eqref{MF}
\begin{equation}\label{Thmu}
T_h {\boldsymbol u} = \mu {\boldsymbol u}.
\end{equation}
In this case, for the spectral projection $E{\boldsymbol g}$, the linear systems are of size $M$
\begin{equation}\label{linearsysTh}
(T- z_jI){\boldsymbol x}_j = {\boldsymbol g}, \quad j = 1, \ldots, W.
\end{equation}

The following is the modified version of {\bf RIM} designed for the Steklov eigenvalue problem using \eqref{Thmu}, denoted by {\bf S-RIM}.
\vskip 0.2cm

\begin{itemize}
\item[] {\bf S-RIM}$(T_h, S, d_0, \delta_0, {\boldsymbol g})$
\item[]{\bf Input:}  matrix $T_h$, region $S$, precision $d_0$, threshold $\delta_0$, random vector ${\boldsymbol g}$.
\item[]{\bf Output:}  eigenvalue(s) $\mu$ inside $S$ 
\vskip 0.1cm
\item[1.] Compute ${\delta_S}$ using \eqref{sigmaP2P}, \eqref{XLXf}, \eqref{linearsysTh}.
\item[2.] Decide if $S$ contains eigenvalue(s).
	\begin{itemize}
		\item If $\delta_S < \delta_0$, then exit.
		\item Otherwise, compute the size $h(S)$ of $S$.
			\begin{itemize}
				\item[-] If $h(S)  > d_0 $, 
						\begin{itemize}
						\item[] partition $S$ into subregions $S_j, j=1, \ldots J$.
						\item[] for $j=1: J$
						\item[] $\qquad${\bf S-RIM}$(T_h, S_j, d_0, \delta_0, {\boldsymbol g})$.
						\item[] end
						\end{itemize}
				\item[-] If $h(S) \le d_0$, 
						\begin{itemize}
							\item[] set $\mu$ to be the center of $S$.
							\item[] output $\mu$ and exit.
						\end{itemize}
			\end{itemize}
	\end{itemize}
\end{itemize}
%
%
%
%
%
%
%
%

\section{Numerical Examples}
We present some numerical results in this section. For all examples, we choose $k=1$. Consider three domains: $\Omega_1$ is the unit disk, $\Omega_2$ is the square whose vertices are
\[
(0, -1), \quad (1, 0), \quad (0, 1), \quad (-1, 0),
\] 
and $\Omega_3$ is an L-shaped domain
given by 
\[
(-0.9, 1.1) \times (-1.1, 0.9) \setminus [0.1, 1.1] \times [-1.1, -0.1].
\] 

For the disk with radius $R$ and constant index of refraction $n$, separation of variables in polar coordinates can be used to obtain exact Steklov eigenvalues.
Since $u$ is the solution of the Helmholtz equation \eqref{DirichletE}, it has the expansion
\begin{equation}\label{expansion}
u(r,\theta)=\sum_{m=-\infty}^{+\infty}a_m J_{|m|}(k\sqrt{n}r)e^{im\theta},\quad r<R, \quad \theta\in (0,2\pi],
\end{equation} 
where $m$'s are integers and $J_{|m|}$ denotes the Bessel function of order $|m|$.
By the boundary condition \eqref{DirichletB}, the coefficients  $a_m$ satisfy
\[
\sum_{m=-\infty}^{+\infty} a_m\bigg (k\sqrt{n} J_{|m|}'(k\sqrt{n}R)+\lambda J_{|m|}(k\sqrt{n}R)\bigg)e^{im\theta}=0,
\]
i.e.,
\begin{equation}\label{coefficient}
a_m\bigg (k\sqrt{n} J_{|m|}'(k\sqrt{n}R)+\lambda J_{|m|}(k\sqrt{n}R)\bigg)=0, \ \ \ m=-\infty, \ldots,\infty.
\end{equation}
If $\lambda$ is a Steklov eigenvalue, there exists at least one $m$ such that $a_m \ne 0$. Then from \eqref{coefficient},  $\lambda$ must satisfy
\begin{equation*}
k\sqrt{n} J_{m}'(k\sqrt{n}R)+\lambda J_{m}(k\sqrt{n}R)=0, \ \ \textrm{for some}\ m \ge 0.
\end{equation*}
 Therefore, the Steklov eigenvalues are given by
\begin{equation*}
\lambda=-\frac{k\sqrt{n} J_{m}'(k\sqrt{n}R)}{J_{m}(k\sqrt{n}R)}, \ \ \textrm{for some}\ m \ge 0.
\end{equation*}
On the other hand, it is clear that all $\lambda_m=-\frac{k\sqrt{n} J_{m}'(k\sqrt{n}R)}{J_{m}(k\sqrt{n}R)},\ m=0, 1, 2, \cdots,$ are Steklov eigenvalues since 
$J_{m}(k\sqrt{n}r)e^{im\theta}$ are non-trivial solutions of the Steklov eigenvalue problem.

From the above discussion, for the unit disk, Stekloff eigenvalues are given by
\begin{equation}\label{DiskLambda}
\lambda_m = - k \sqrt{n} \frac{J'_m(k\sqrt{n})}{J_m(k\sqrt{n})}, \quad m = 0, 1, 2, \ldots.
\end{equation}
In Fig.~\ref{lambdam}, we show $\lambda_m$ against the index of refraction $n$ for $m=0, 1, 2$. 
\begin{figure}
\begin{center}
{ \scalebox{0.5} {\includegraphics{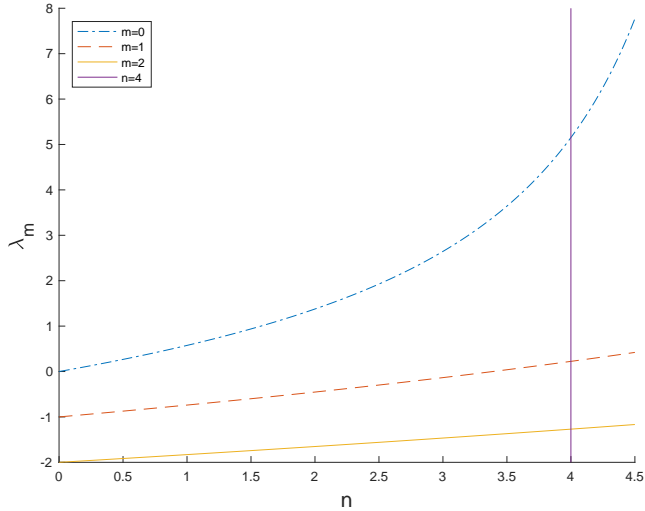}}}
\caption{$\lambda_m$ v.s. $n$ for $m=0,1,2$.}
 \label{lambdam}
\end{center}
\end{figure}
Using \eqref{DiskLambda}, when $n=4$ the $6$ largest Steklov eigenvalues are 
\begin{equation}\label{realnSteklov}
\lambda_1=5.151841, \quad \lambda_{2,3}=0.223578, \quad \lambda_{4,5}=-1.269100,  \quad \lambda_6=-2.472703
\end{equation}
and when $n=4+4i$ the $4$ complex Steklov eigenvalues with largest imaginary parts are
\begin{equation}\label{compnSteklov}
\lambda_1=\begin{array}{l} -0.320506 \\  \quad+ 3.124689i \end{array}, \quad  \lambda_{2,3}=\begin{array}{l} -0.136861 \\  \quad+ 1.396737i\end{array}, \quad
\lambda_4=\begin{array}{l} -1.353076 \\  \quad+ 0.791723i\end{array}.
\end{equation}


\subsection{Selfadjoint Cases}
When the index of refraction $n(x)$ is real, the problem is selfadjoint and all Steklov eigenvalues are real.
We compute the six largest Steklov eigenvalues for $n(x) = 4$ on a series of uniformly refined meshes for each domain.
The results are shown in Tables \ref{EigR1}, \ref{EigS1}, and \ref{EigL1}. The mesh sizes are denoted by $h$.
The values are consistent with those in \cite{Cakoni2016SIAMAM}.
\begin{center}
\begin{table}[h!]
\begin{center}
\begin{tabular}{lrrrrrr}
\hline
 $h$& 1st&2nd&3rd&4th&5th&6th\\
\hline \hline
0.2341& 5.016606&0.206380&0.205917&-1.294039&-1.294339&-2.561531\\
0.1208&5.116979&0.219175&0.219048&-1.275370&-1.275440&-2.494866\\
0.0613&5.143045&0.222469&0.222436&-1.270670&-1.270687&-2.478245\\
0.0309&5.149636&0.223301&0.223292&-1.269493&-1.269497&-2.474088\\
0.0155&5.151289&0.223509&0.223507&-1.269198&-1.269199&-2.473049\\
\hline
\end{tabular}
\end{center}
\caption{The largest six Steklov eigenvalues for the circle $n(x)=4$.}
\label{EigR1}
\end{table}
\end{center}

Note that for the unit disk, the first $3$ eigenvalues are given by \eqref{DiskLambda} for $m=0, 1, 2$. The values of the columns 2, 3, 4 in Table~\ref{EigR1} approximate the intersections
of $\lambda_m, m=0, 1, 2$ and $n=4$ in Fig.~\ref{lambdam}. 

\begin{center}
\begin{table}[h!]
\begin{center}
\begin{tabular}{lrrrrrr}
\hline
 $h$& 1st&2nd&3rd&4th&5th&6th\\
\hline \hline
0.2441& 2.191504&-0.220113&-0.220397&-0.929022& -2.856629& -2.970847 \\
0.1220&2.199625&-0.214254&-0.214327& -0.913327& -2.791699&-2.819631 \\
0.0610&2.201774&-0.212756&-0.212774&-0.909377&-2.774697&-2.781648\\
0.0305&2.202323&-0.212378&-0.212383&-0.908387& -2.770389&-2.772125\\
0.0153&2.202461&-0.212284&-0.212285&-0.908139&-2.769308&-2.769742\\
\hline
\end{tabular}
\end{center}
\caption{The largest six Steklov eigenvalues for the square $n(x)=4$.}
\label{EigS1}
\end{table}
\end{center}

\begin{center}
\begin{table}[h!]
\begin{center}
\begin{tabular}{lrrrrrr}
\hline
 $h$& 1st&2nd&3rd&4th&5th&6th\\
\hline \hline
0.2383&2.507719&0.840066&0.117326&-1.103880&-1.112608&-1.464999\\
0.1192&2.526360&0.851538&0.122637&-1.090066&-1.096900& -1.429175\\
0.0596&2.531439&0.855499&0.124041&-1.086500&-1.092730&-1.420001\\
0.0298&2.532762&0.856926&0.124402&-1.085600& -1.091620&-1.417681\\
0.0149&2.533099&0.857457&0.124494&-1.085374&-1.091319& -1.417098\\
\hline
\end{tabular}
\end{center}
\caption{The largest six Steklov eigenvalues for the L-shaped domain $n(x)=4$.}
\label{EigL1}
\end{table}
\end{center}

In Fig.~\ref{ConvRateReal}, we show the convergence rates of Steklov eigenvalues of three domains.
Since we use the linear Lagrange finite element, the second order convergence is achieved for the unit circle and the square.
For the L-shaped domain, which is non-convex, the convergence rate of the second Steklov eigenvalue is lower than $2$, while the other $5$ 
eigenvalues have second order convergence.
\begin{figure}[h!]
\begin{center}
\begin{tabular}{cc}
\resizebox{0.5\textwidth}{!}{\includegraphics{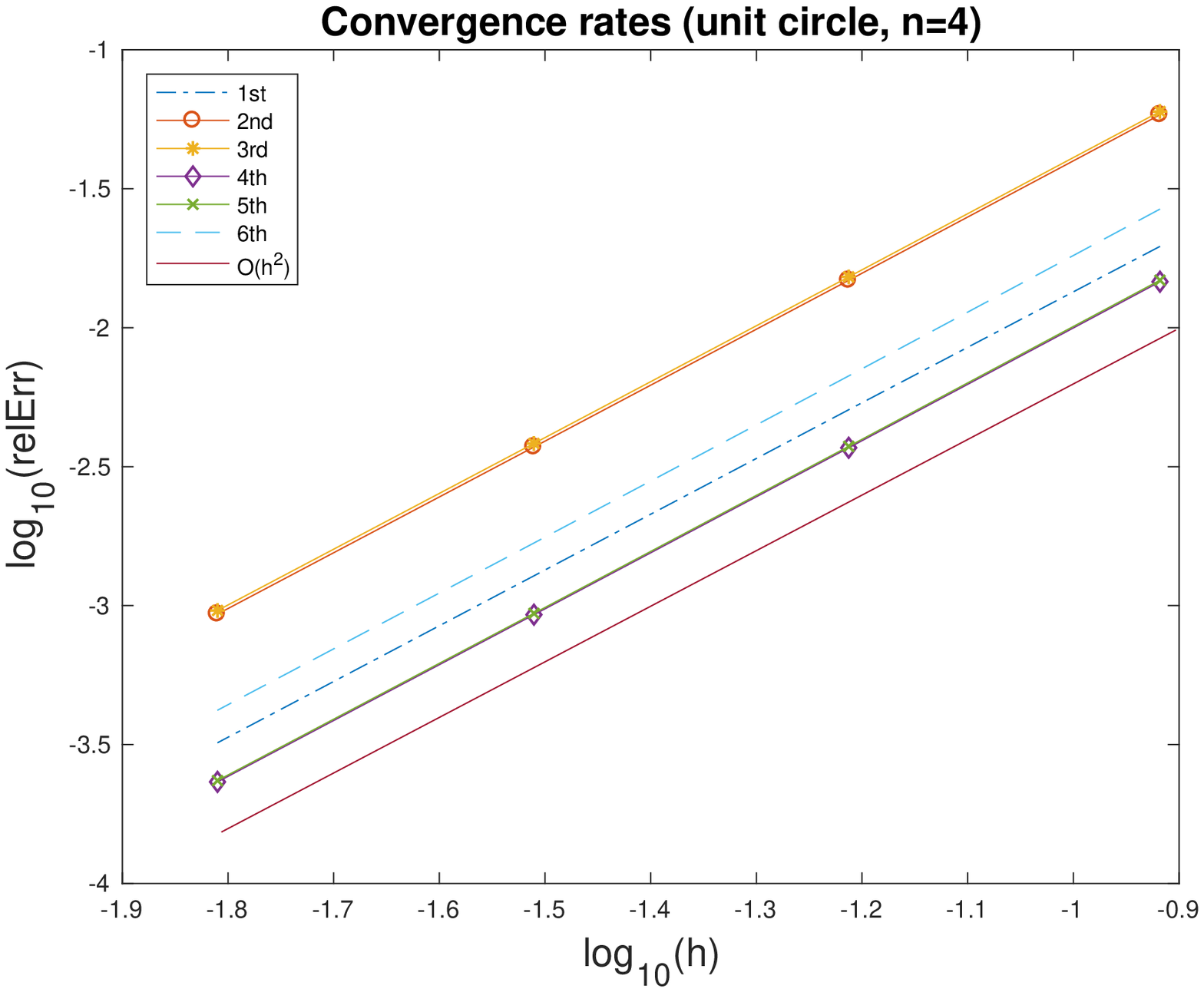}}&
\resizebox{0.5\textwidth}{!}{\includegraphics{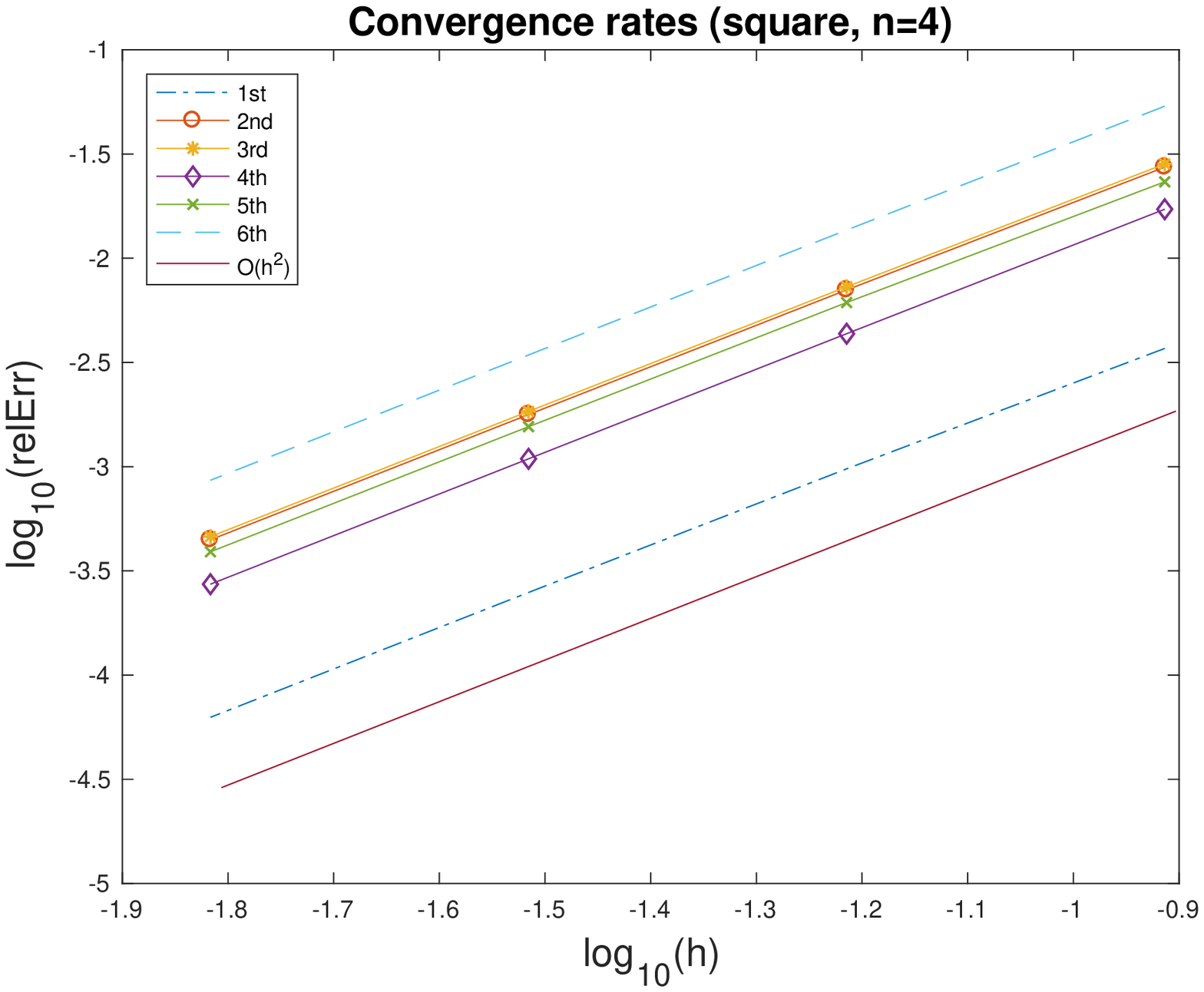}}\\
\resizebox{0.5\textwidth}{!}{\includegraphics{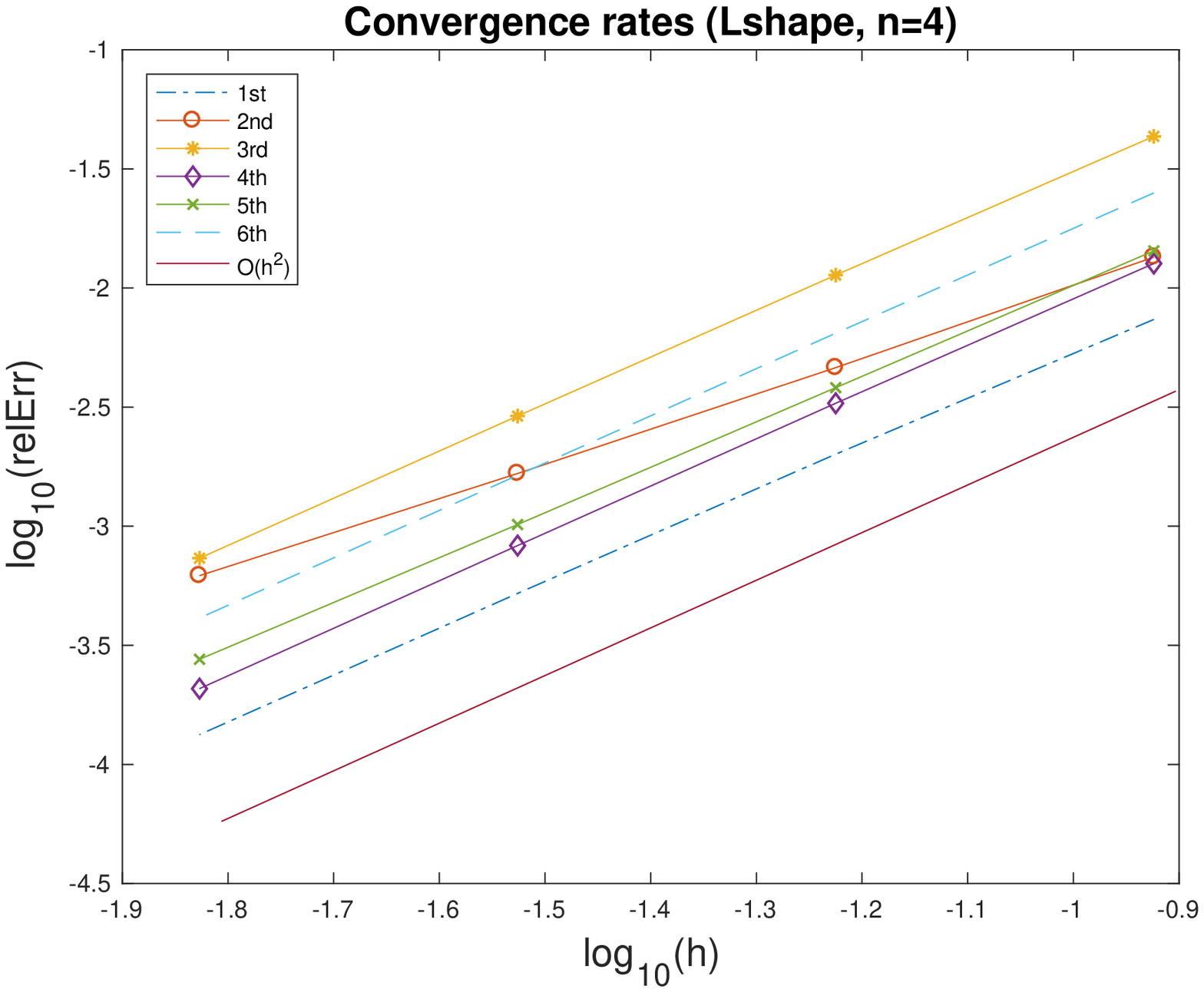}}
\end{tabular}
\end{center}
\caption{Convergence rates of Steklov eigenvalues $n(x)=4$. Top Left: the unit circle. Top Right: the square. Bottom Left: the L-shaped domain.}
\label{ConvRateReal}
\end{figure}

Note that the convergence rate of the eigenvalues relates to the regularity of the associated eigenfunctions. The result in Fig.~\ref{ConvRateReal} implies that
the eigenfunction associated with the second eigenvalues does not belong to $H^2(\Omega)$.

\subsection{Non-selfadjoint Cases} 
When $n(x)$ is complex, the solution operator is non-selfadjoint. 
Consequently, we end up with non-Hermitian matrix eigenvalue problem. Computation of complex eigenvalues of non-Hermitian matrices
are challenging, in particular, when there is no a priori spectral information on the number and distribution of eigenvalues.
To this end, we use the new spectral indicator method introduced in the previous section to compute Steklov eigenvalues.
For all examples, we take $n(x) = 4+4i$.

The left picture of Fig.~\ref{RIMregion} shows the distribution of Steklov eigenvalues for the unit disk on the complex plane.
These are eigenvalues computed using Matlab `eig' for the non-Hermitian matrix ($2097 \times 2097$) resulting from the finite element method. 
The mesh size is $h \approx 0.0613$. Note that `eig' is a direct eigensolver and not suitable for larger matrices. 
\begin{figure}
\begin{center}
\begin{tabular}{cc}
\resizebox{0.5\textwidth}{!}{\includegraphics{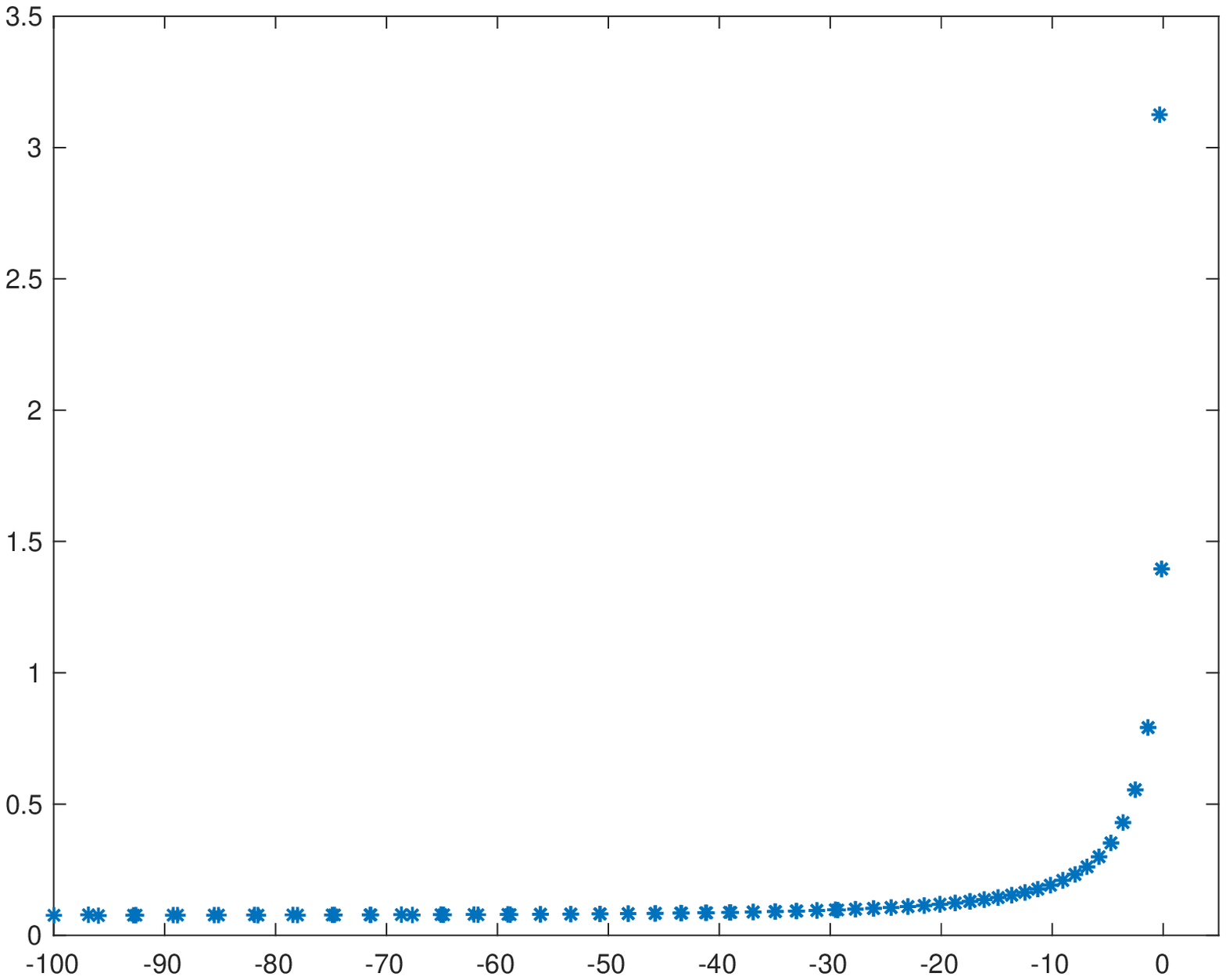}}&
\resizebox{0.5\textwidth}{!}{\includegraphics{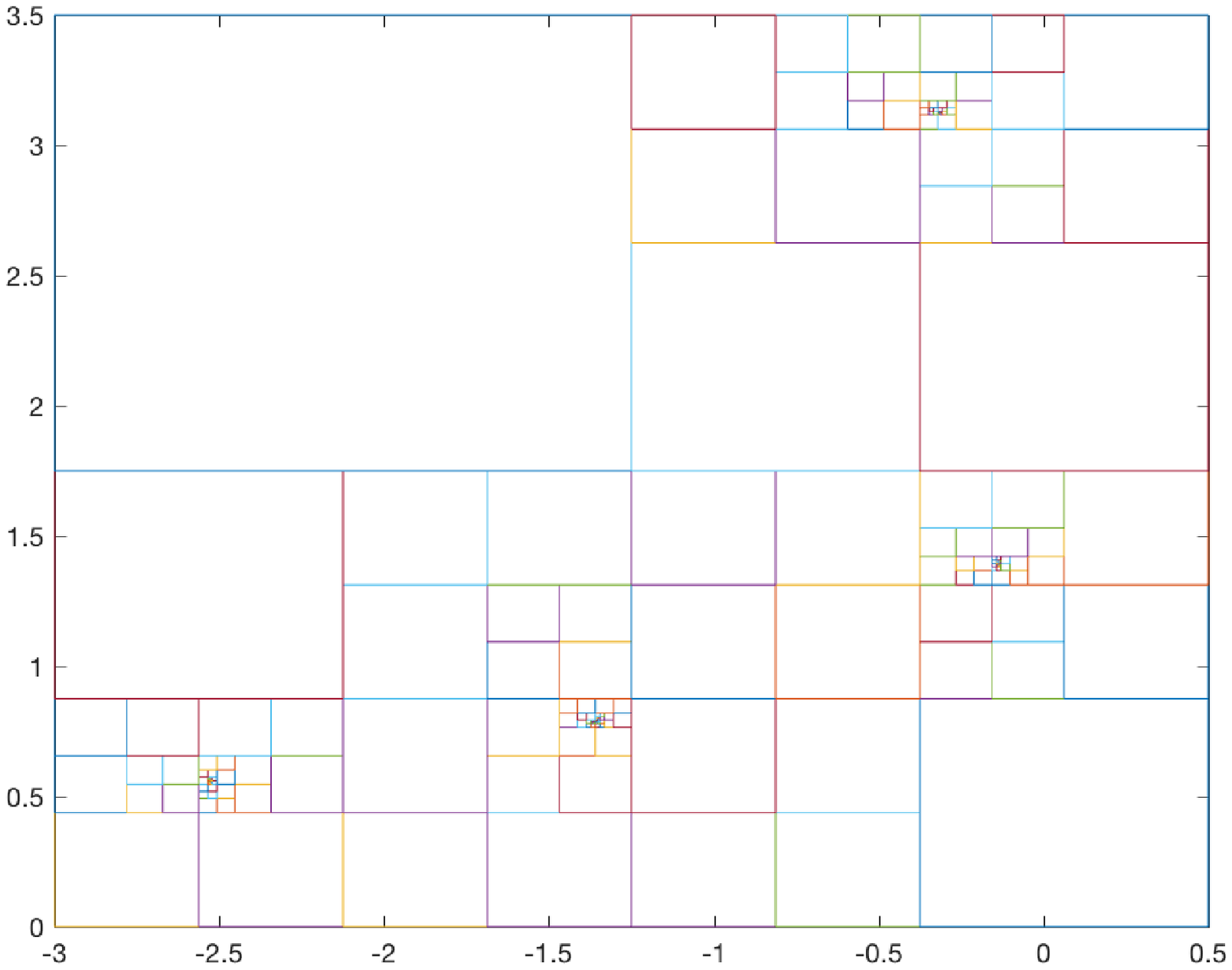}}
\end{tabular}
\end{center}
\caption{ Left: Steklov eigenvalues for the unit disk on the complex plane $n(x)=4+4i$. Right: the region explored by {\bf S-RIM}.}
\label{RIMregion}
\end{figure}
Since we are interested in eigenvalues close to the origin, we choose the search region $S$ on the complex plane to be the square $[-3, 0.5] \times [0, 3.5]$.
The right picture of Fig.~\ref{RIMregion} shows how {\bf S-RIM} explores $S$ and finds the eigenvalues inside $S$.

The computed complex eigenvalues for the three domains are shown in Tables \ref{EigCR1},  \ref{EigCS1}, and \ref{EigCL1}, respectively. 
We arrange the eigenvalues according to the decreasing order of their imaginary parts. Again, these values are consistent with the values given
in \cite{Cakoni2016SIAMAM}, which are reconstructed by some inverse algorithm using scattering data. 
\begin{center}
\begin{table}[h!]
\begin{center}
\begin{tabular}{lrrrr}
\hline
 $h$& 1st&2nd&3rd&4th\\
\hline \hline
0.2341& $\begin{array}{l} -0.298121 \\  \quad+ 3.131620i \end{array}$&
$\begin{array}{l} -0.134181\\ \quad + 1.375387i \end{array}$&
$\begin{array}{l} -0.133990 \\ \quad + 1.374565i \end{array}$&
$\begin{array}{l} -1.371155\\ \quad + 0.786327i \end{array}$ \\ \hline
0.1208& $\begin{array}{l} -0.314981 \\ \quad + 3.126494i \end{array}$&
$\begin{array}{l}  -0.136106 \\ \quad + 1.391267i \end{array}$&
$\begin{array}{l} -0.136049 \\ \quad + 1.391044i \end{array}$&
$\begin{array}{l}  -1.357526\\ \quad+ 0.790318i \end{array}$\\ \hline
0.0613& $\begin{array}{l} -0.319127 \\ \quad + 3.125146i\end{array}$&
$\begin{array}{l}  -0.136650\\ \quad + 1.395302i \end{array}$&
$\begin{array}{l}  -0.136666 \\ \quad + 1.395359i \end{array}$&
$\begin{array}{l}  -1.354126 \\ \quad + 0.79135i \end{array}$\\ \hline
0.0310&$\begin{array}{l}  -0.320161 \\ \quad + 3.124804i \end{array}$&
$\begin{array}{l}  -0.136812 \\ \quad + 1.396392i\end{array}$&
$\begin{array}{l}  -0.136808 \\ \quad + 1.396378i \end{array}$&
$\begin{array}{l}  -1.353338\\ \quad + 0.791628i \end{array}$\\ \hline
0.0155& $\begin{array}{l} -0.320420 \\ \quad + 3.124718i \end{array}$&
$\begin{array}{l}  -0.136849 \\ \quad + 1.396651i \end{array}$&
$\begin{array}{l}  -0.136848 \\ \quad + 1.396647i \end{array}$&
$\begin{array}{l}  -1.353145\\ \quad + 0.791701i \end{array}$\\
\hline
\end{tabular}
\end{center}
\caption{Eigenvalues for the circle $n(x)=4+4i$.}
\label{EigCR1}
\end{table}
\end{center}

\begin{center}
\begin{table}[h!]
\begin{center}
\begin{tabular}{lrrrr}
\hline
 $h$& 1st&2nd&3rd&4th\\
\hline \hline
0.2441&$\begin{array}{l} 0.698699\\ \quad + 2.495471i \end{array}$&
 $\begin{array}{l} -0.344215\\ \quad + 0.843688i \end{array}$&
 $\begin{array}{l} -0.344302\\ \quad + 0.843436i \end{array}$&
 $\begin{array}{l} -0.968470\\ \quad + 0.538448i \end{array}$\\  \hline
0.1220&$\begin{array}{l} 0.689736 \\ \quad + 2.495375i \end{array}$&
 $\begin{array}{l} -0.343337 \\ \quad + 0.848883i \end{array}$&
 $\begin{array}{l} -0.343320\\ \quad + 0.848942i \end{array}$&
 $\begin{array}{l} -0.954693\\ \quad + 0.539666i \end{array}$\\  \hline
0.0610&$\begin{array}{l} 0.687363\\ \quad + 2.495317i \end{array}$&
 $\begin{array}{l} -0.343117\\ \quad + 0.850277i \end{array}$&
 $\begin{array}{l} -0.343114\\ \quad + 0.850292i \end{array}$&
 $\begin{array}{l} -0.951256\\ \quad + 0.539987i \end{array}$\\  \hline
0.0305&$\begin{array}{l} 0.686756 \\ \quad + 2.495300i \end{array}$&
 $\begin{array}{l} -0.343064\\ \quad + 0.850629i \end{array}$&
 $\begin{array}{l} -0.343063\\ \quad + 0.850632i \end{array}$&
  $\begin{array}{l} -0.950397 \\ \quad+ 0.540069i \end{array}$\\  \hline
 0.0151&$\begin{array}{l} 0.686603\\ \quad + 2.495295i \end{array}$&
 $\begin{array}{l} -0.343051 \\ \quad+ 0.850718i \end{array}$&
 $\begin{array}{l} -0.343051 \\ \quad+ 0.850717i \end{array}$&
  $\begin{array}{l} -0.950182 \\ \quad+ 0.540090i \end{array}$\\
\hline
\end{tabular}
\end{center}
\caption{Eigenvalues for the square $n(x)=4+4i$.}
\label{EigCS1}
\end{table}
\end{center}

\begin{center}
\begin{table}[h!]
\begin{center}
\begin{tabular}{lrrrr}
\hline
 $h$& 1st&2nd&3rd&4th\\
\hline \hline
0.2383&$\begin{array}{l} 0.548195\\ \quad + 2.892865i\end{array}$&
  $\begin{array}{l} 0.392629 \\ \quad+ 1.445484i\end{array}$&
 $\begin{array}{l} -0.077110 \\ \quad+ 1.035407i\end{array}$&
 $\begin{array}{l} -1.157394 \\ \quad+ 0.529887i\end{array}$\\ \hline
0.1192&$\begin{array}{l} 0.523142 \\ \quad+ 2.885329i\end{array}$&
 $\begin{array}{l}  0.394633 \\ \quad+ 1.454461i\end{array}$&
 $\begin{array}{l} -0.077154 \\ \quad+ 1.040772i\end{array}$&
 $\begin{array}{l} -1.146157 \\ \quad+ 0.529839i\end{array}$\\ \hline
0.0596&$\begin{array}{l} 0.516546\\ \quad + 2.883129i\end{array}$&
  $\begin{array}{l} 0.395906 \\ \quad+ 1.457375i\end{array}$&
 $\begin{array}{l} -0.07717 \\ \quad+ 1.042191i\end{array}$&
 $\begin{array}{l} -1.143291\\ \quad + 0.529817i\end{array}$\\ \hline
0.0298&$\begin{array}{l} 0.514857\\ \quad + 2.882533i\end{array}$&
  $\begin{array}{l} 0.396543 \\ \quad+ 1.458387i\end{array}$&
 $\begin{array}{l} -0.077178 \\ \quad+ 1.042555i\end{array}$&
 $\begin{array}{l} -1.142571 \\ \quad+ 0.529812i\end{array}$\\ \hline
0.0149&$\begin{array}{l} 0.514430 \\ \quad+ 2.882377i\end{array}$&
  $\begin{array}{l} 0.396829 \\ \quad+ 1.458755i\end{array}$&
 $\begin{array}{l} -0.077177 \\ \quad+ 1.042647i\end{array}$&
 $\begin{array}{l} -1.142391 \\ \quad+ 0.529811i\end{array}$\\ 
\hline
\end{tabular}
\end{center}
\caption{Eigenvalues for the L-shaped domain $n(x)=4+4i$.}
\label{EigCL1}
\end{table}
\end{center}

In Fig.~\ref{ConvRateComplex}, we show the convergence rates of complex Steklov eigenvalues. The second order convergence is achieved for the unit circle
and square. Similar to real $n(x)$, the second eigenvalue of the L-shaped domain has lower convergence rate indicating that the associated eigenfunction has lower regularity.
\begin{figure}
\begin{center}
\begin{tabular}{cc}
\resizebox{0.5\textwidth}{!}{\includegraphics{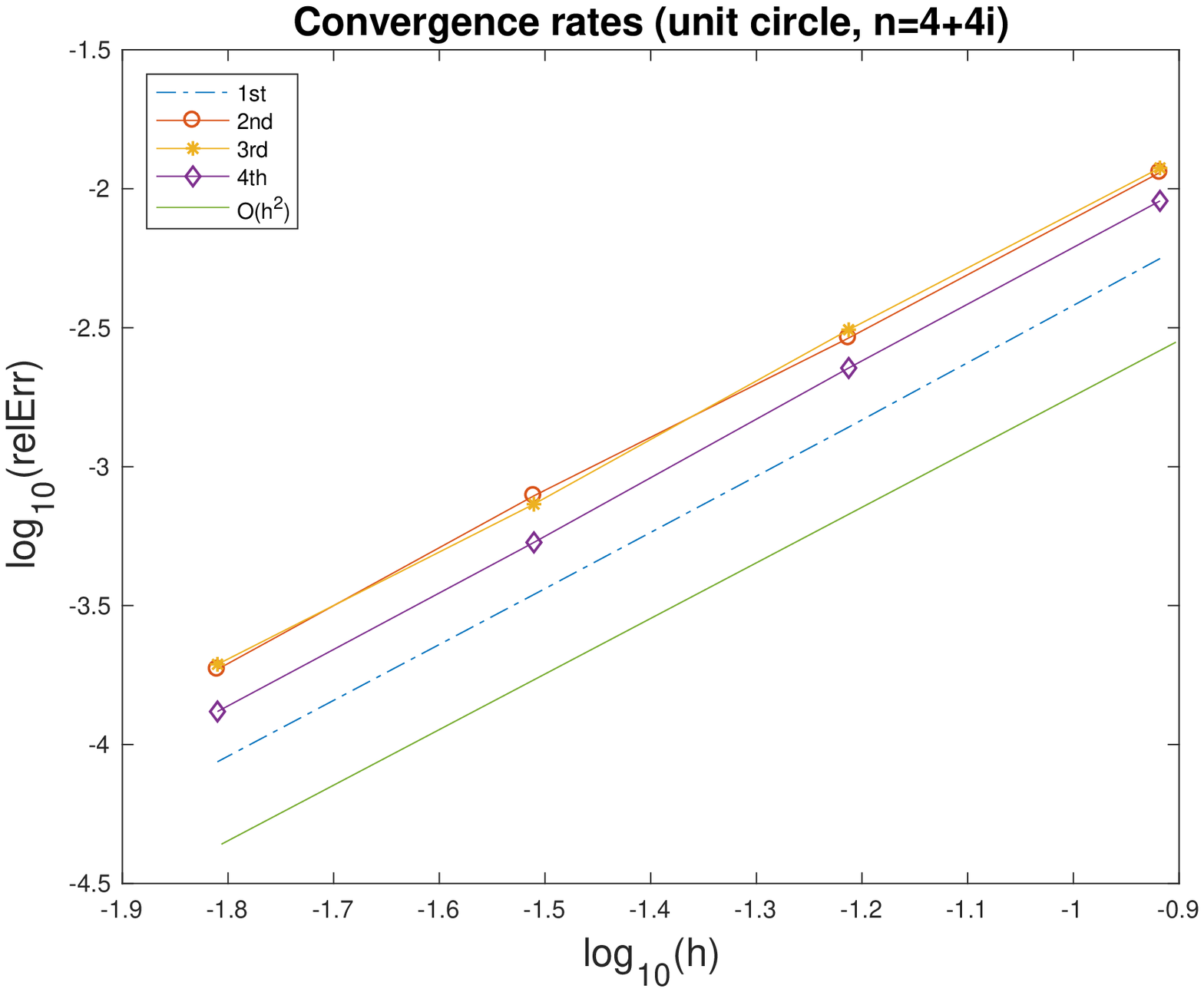}}&
\resizebox{0.5\textwidth}{!}{\includegraphics{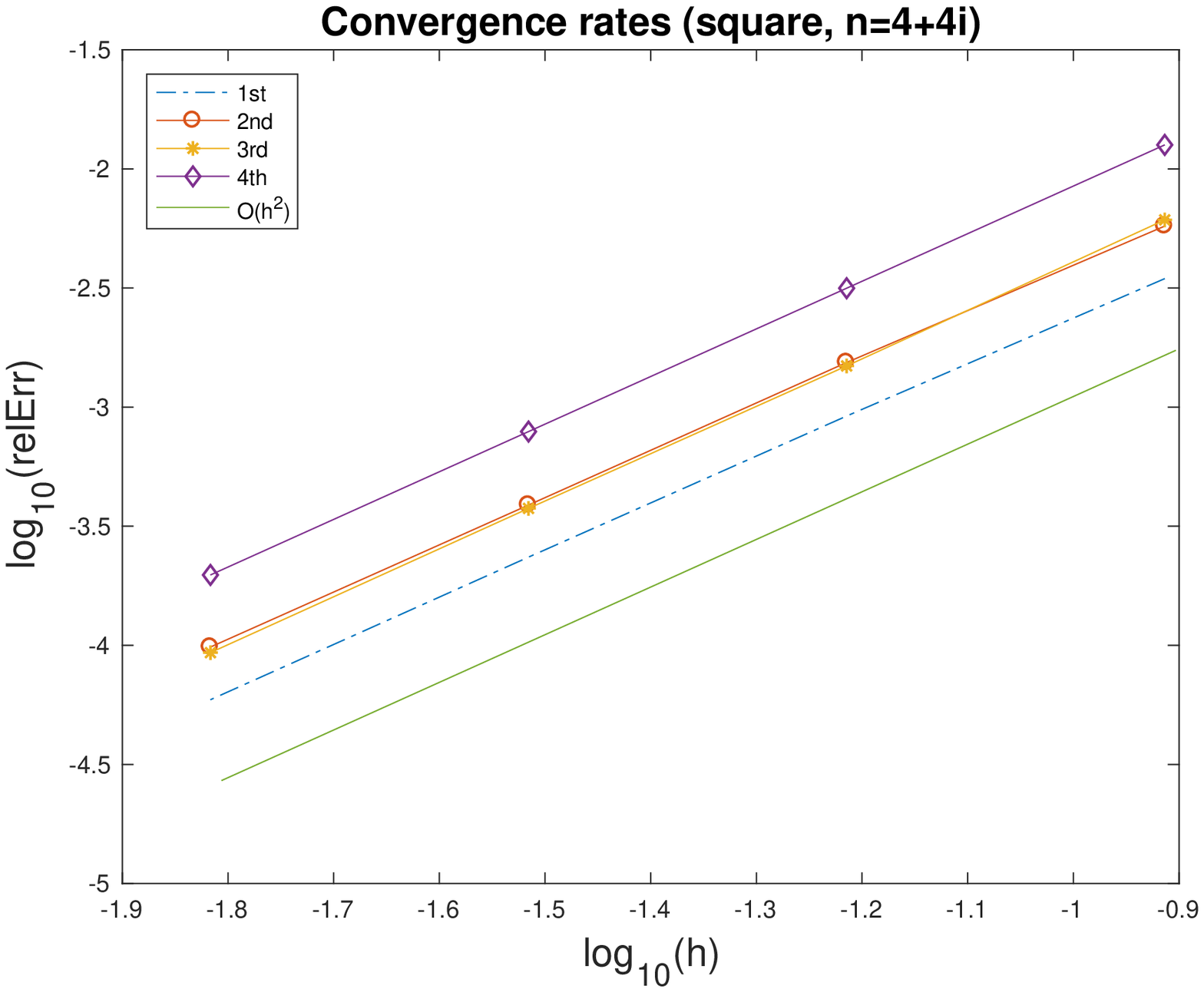}}\\
\resizebox{0.5\textwidth}{!}{\includegraphics{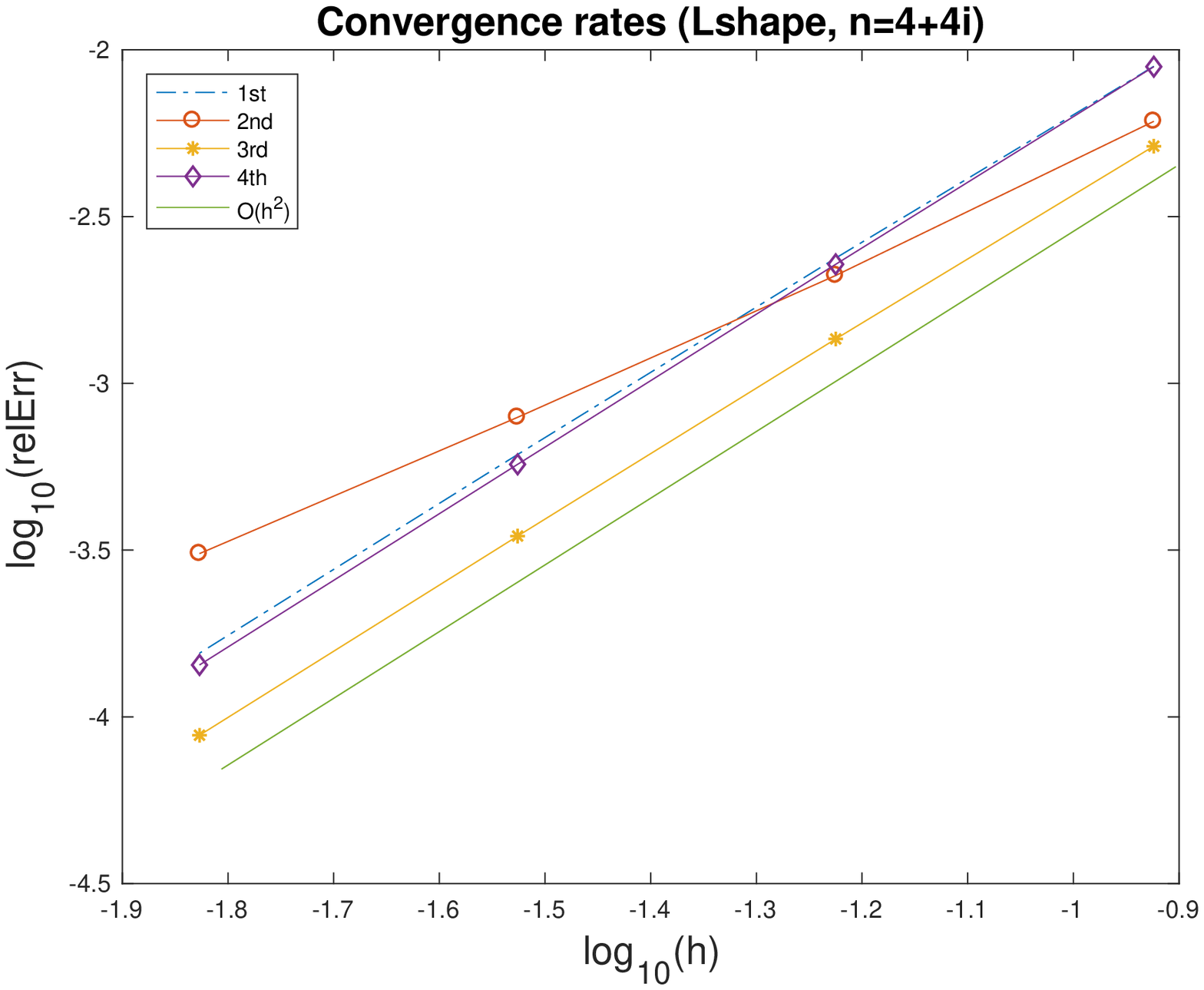}}
\end{tabular}
\end{center}
\caption{Convergence rates of Steklov eigenvalues ($n(x)=4+4i$). Top Left: the unit circle. Top Right: the square. Bottom Left: the L-shaped domain.}
\label{ConvRateComplex}
\end{figure}

\section{Conclusions and Future Works}
In this paper, we study the computation of a non-selfadjoint Steklov eigenvalue problem arising from the inverse scattering theory.
To the authors' knowledge, this is the first numerical paper containing both theory and numerical examples.
An early paper by Bramble and Osborn considered a similar but different non-selfadjoint Steklov eigenvalue problem \cite{BrambleOsborn1972}.
The second order non-selfadjoint operator is assumed to be uniformly elliptic and no numerical results were reported therein.

The contribution of the paper is as follows. The convergence of Lagrange finite elements is proved using the spectral perturbation theory for compact operators. 
Due to the fact that the problem is non-selfadjoint and no a priori spectral information is available, the recently developed spectral indicator method {\bf RIM}
is considered for the resulting non-Hermitian matrix eigenvalue problems.
To improve efficiency, we derive an equivalent but much smaller matrix eigenvalue problem involving only boundary unknowns.
Then a modified version of {\bf RIM} is developed to compute (complex) eigenvalues.

Non-selfadjoint Steklov eigenvalue problems have many important applications. Numerical computation of these problems is challenging.
The problem considered in this paper is related to the Helmholtz equation. Similar problems exist for the Maxwell equation and elasticity equation.
In future, we plan to extend the theory and algorithm here to these problems.




\end{document}